\author{Julia Gaudio\footnote{Northwestern University.
    Email: julia.gaudio@northwestern.edu. Part of this work was completed while J.G. was at the Massachusetts Institute of Technology.} and Elchanan Mossel\footnote{Massachusetts Institute of Technology.
    Email: elmos@mit.edu. E.M is partially supported by Vannevar Bush Faculty Fellowship ONR-N00014-20-1-2826, NSF awards CCF 1918421 and DMS-1737944, ARO MURI grant W911NF1910217 and by a Simons Investigator award.}}
\title{Shotgun Assembly of Erd\H{o}s-R\'enyi Random Graphs}
\begin{document}
\maketitle

\begin{abstract}
Graph shotgun assembly refers to the problem of reconstructing a graph from a collection of local neighborhoods. In this paper, we consider shotgun assembly of \ER random graphs $G(n, p_n)$, where $p_n = n^{-\alpha}$ for $0 < \alpha < 1$. We consider both reconstruction up to isomorphism as well as exact reconstruction (recovering the vertex labels as well as the structure). We show that given the collection of distance-$1$ neighborhoods, $G$ is exactly reconstructable for $0 < \alpha < \frac{1}{3}$, but not reconstructable for $\frac{1}{2} < \alpha < 1$. Given the collection of distance-$2$ neighborhoods, $G$ is exactly reconstructable for $\alpha \in \left(0, \frac{1}{2}\right) \cup \left(\frac{1}{2}, \frac{3}{5}\right)$, but not reconstructable for $\frac{3}{4} < \alpha < 1$.
\end{abstract}

\section{Introduction}
Graph shotgun assembly refers to the reconstruction of a graph from a collection of local neighborhoods. The terminology is inspired by DNA shotgun assembly, which is the problem of determining a DNA sequence from multiple short sequences. Mossel and Ross \cite{Mossel2019} introduced the problem of graph shotgun assembly for generative models. 

We now describe the graph shotgun assembly problem. Let $G = (V,E)$ be a graph with vertices labeled from $[n] = \{1, 2, \dots, n\}$. For a vertex $v \in V$, let $\mathcal{N}_r(v)$ be the subgraph induced by all vertices at distance at most $r$ from $v$ in the graph $G$. 
For each vertex $v \in V$, we observe the neighborhood $\mathcal{N}_r(v)$. The central vertex $v$ in this neighborhood is labeled, and other vertices are unlabeled.

We say two graphs $G = (V_G, E_G)$ and $H = (V_H, E_H)$ are \emph{isomorphic} if there exists a bijection $f: V_G \to V_H$ such that $(u,v) \in E_G \iff (f(u), f(v)) \in E_H$. In that case, we write $G \sim H$. The goal is to reconstruct $G$ from the set of $r$-neighborhoods, up to isomorphism. In other words, we wish to find a graph $H$ isomorphic to $G$ given the unlabeled $r$-neighborhoods of $G$.

We say that two graphs $H_1(V, E_1)$ and $H_2(V, E_2)$ have the same $r$-neighborhoods if for all $v \in V$, the $r$-neighborhood around $v$ in $H_1$ is isomorphic to the $r$-neighborhood around $v$ in $H_2$. In that case we write $H_1 \sim_r H_2$. We say that the graph $G$ is reconstructable from its $r$-neighborhoods if for all graphs $H = (V, E_H)$, $G \sim_r H \implies G \sim H$. In other words, $G$ is reconstructable from its $r$-neighborhoods if the only other graphs with the same $r$-neigborhoods are those that are isomorphic to $G$.

In addition to the above reconstructability question, we consider exact reconstructability. We say that a graph $G$ is \emph{exactly reconstructable} if it is possible to recover $G$ (rather than just $H \sim G$). Note that exact reconstructability implies reconstructability up to isomorphism. For an example where reconstruction up to isomorphism is possible, but exact reconstruction is not, consider the graph on vertices $\{1,2,3,4\}$ with edges $(1,2)$ and $(3,4)$. This graph has the same neighborhoods as the graph on vertices $\{1,2,3,4\}$ with edges $(1,3)$ and $(2,4)$

Along with introducing the problem of graph shotgun assembly for generative models, \cite{Mossel2019} considered several random structure models. First, they considered the $d$-dimensional $n$-lattice, where the vertices are additionally labeled i.i.d. according to some distribution on $[q]$. Next, they considered the reconstruction of \ER random graphs, both in the dense and sparse case. In the sparse case where $p_n = \frac{\lambda}{n}$, it was shown that the asymptotic threshold for reconstructability is a radius of $r = \log(n)$, as long as $\lambda \neq 1$. In the dense case, it was shown that $r = 3$ suffices for reconstruction when $\lim_{n \to \infty} \frac{n p_n}{\log^2(n)} = \infty$. Finally, \cite{Mossel2019} introduced the random jigsaw puzzle, for which there has been much follow-up work (\cite{Balister2017}, \cite{Bordenave2020}, \cite{Martinsson2016}, \cite{Martinsson2019}, \cite{Nenadov2017}). A jigsaw puzzle is an $n \times n$ grid where the border of two adjacent pieces is uniformly assigned to one of $q$ possible shapes called ``jigs.'' Martinsson (\cite{Martinsson2016}, \cite{Martinsson2019}) showed that reconstruction is possible if $q \geq (2+\epsilon)n$, and impossible if $q \leq \frac{2}{\sqrt{e}} n$. Other follow-up work includes a study of reconstruction in random regular graphs \cite{Mossel2015} and the hypercube \cite{Przykucki2019}.

In this paper, we continue the study of graph shotgun assembly for \ER random graphs. Let $G = (V, E)$ be a labeled Erd\H{o}s-R\'enyi graph drawn from the model $G(n,p_n)$.  The following general observation was used in \cite{Mossel2019} to establish reconstruction from $3$-neighborhoods.
\begin{lemma}[Lemma 2.4 in \cite{Mossel2019}]\label{lemma:overlap}
If $\mathcal{N}_{r-1}(v) \not \sim \mathcal{N}_{r-1}(w)$ for all vertices $v \neq w$, then there is an algorithm for recovering the graph from $r$-neighborhoods.
\end{lemma}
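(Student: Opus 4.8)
The plan is to exploit the fact that each observed neighborhood comes with a \emph{labeled} center, together with the hypothesis that the map $w \mapsto [\mathcal{N}_{r-1}(w)]$ (the isomorphism class of the $(r-1)$-neighborhood) is injective on $V$. I would treat this isomorphism class as a \emph{signature} that uniquely names each vertex, and then decode the labels of the unlabeled vertices appearing inside each $r$-neighborhood by matching signatures. First I would build a dictionary: for every labeled center $w \in [n]$, extract $\mathcal{N}_{r-1}(w)$ from the observed $\mathcal{N}_r(w)$ (it is simply the subgraph of $\mathcal{N}_r(w)$ induced by the vertices at distance at most $r-1$ from the center), and record the pair $\big([\mathcal{N}_{r-1}(w)],\, w\big)$. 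By the hypothesis no two centers produce the same signature, so this dictionary is a well-defined injection from signatures to labels.

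The crucial step, and the one I expect to be the main obstacle, is a locality claim: if $u$ is a neighbor of the center $v$ in $G$, then $\mathcal{N}_{r-1}(u)$ is faithfully visible inside $\mathcal{N}_r(v)$. Writing $d$ for graph distance in $G$, for any $x$ with $d(u,x) \le r-1$ we have $d(v,x) \le d(v,u) + d(u,x) \le 1 + (r-1) = r$, so every vertex of the ball $B = \{x : d(u,x) \le r-1\}$ already lies in $\mathcal{N}_r(v)$. Moreover any shortest path from $u$ to $x$ with $x \in B$ stays inside $B$ and hence inside $\mathcal{N}_r(v)$, so distances measured from $u$ within $\mathcal{N}_r(v)$ are not inflated; conversely, passing to an induced subgraph can only increase distances. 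Hence the radius-$(r-1)$ ball around $u$ computed inside $\mathcal{N}_r(v)$ is exactly $B$, with the correct induced edges, so it is isomorphic to $\mathcal{N}_{r-1}(u)$. I would carefully verify this two-sided argument, as it is what guarantees that the signature of a neighbor of $v$ can be read off from $\mathcal{N}_r(v)$ alone, without knowing that neighbor's true label.

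Given the dictionary and the locality claim, reconstruction is immediate. For each labeled center $v$ I would list the vertices $u_1, \dots, u_k$ at distance exactly $1$ from $v$ in $\mathcal{N}_r(v)$ (these are precisely the $G$-neighbors of $v$, since $\mathcal{N}_r(v)$ is induced), compute each signature $[\mathcal{N}_{r-1}(u_i)]$ from within $\mathcal{N}_r(v)$ using the locality claim, and look it up in the dictionary to recover the true label $\ell_i$ of $u_i$. This yields the fully labeled edge $(v, \ell_i)$. Ranging over all centers $v$ and all their distance-$1$ vertices recovers every edge of $G$ and introduces no spurious edges, so $G$ is reconstructed exactly. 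The hypothesis is used only to ensure that the signature-to-label lookup is unambiguous; with that in hand the procedure is deterministic and plainly terminates, establishing the existence of the required algorithm.
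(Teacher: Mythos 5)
Your proposal is correct and is exactly the paper's argument: the paper's ``overlap method'' likewise labels each neighbor $v_0$ of a center $v$ by matching the copy of $\mathcal{N}_{r-1}(v_0)$ contained in $\mathcal{N}_r(v)$ against the uniquely identifying $(r-1)$-neighborhoods of the labeled centers. Your explicit verification of the locality claim (that the radius-$(r-1)$ ball around a neighbor, computed inside the induced subgraph $\mathcal{N}_r(v)$, coincides with $\mathcal{N}_{r-1}(v_0)$) is a detail the paper leaves implicit, but it is the same approach.
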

The algorithm is as follows: Given a neighborhood $\mathcal{N}_r(v)$, we can uniquely label any neighbor $v_0$ of $v$ by examining $\mathcal{N}_{r-1}(v_0)$, which is contained within $\mathcal{N}_r(v)$. We call this algorithm the ``overlap method." Using Lemma \ref{lemma:overlap}, \cite{Mossel2019}  showed the following.
\begin{theorem}[Theorem 4.5 in \cite{Mossel2019}]\label{theorem:3-neighbourhoods-original}
$G(n,p_n)$ is reconstructable from its $3$-neighborhoods with high probability, for $p_n$ satisfying $\lim_{n \to \infty} \frac{n p_n}{\log^2(n)} = \infty$. 
\end{theorem}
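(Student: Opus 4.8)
The plan is to invoke Lemma~\ref{lemma:overlap} with $r = 3$, which reduces the theorem to a purely structural statement about the random graph itself: it suffices to prove that with high probability the distance-$2$ neighborhoods are pairwise non-isomorphic, i.e. $\mathcal{N}_2(v) \not\sim \mathcal{N}_2(w)$ for all $v \neq w$. Since Lemma~\ref{lemma:overlap} only requires distinctness of the $(r-1)$-neighborhoods of $G$, there is no need to reason about adversarial graphs $H$ at all; the entire argument becomes a single high-probability event measured on $G(n,p_n)$, after which the overlap method recovers $G$.

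Next I would extract easy isomorphism invariants of $\mathcal{N}_2(v)$. For any neighbor $u$ of $v$, every neighbor of $u$ lies within distance $2$ of $v$, so the full degree $\deg_G(u)$ is visible inside $\mathcal{N}_2(v)$. Hence the central degree $\deg_G(v)$ together with the multiset of neighbor degrees $M_v = \{\deg_G(u) : u \sim v\}$ forms an isomorphism invariant of $\mathcal{N}_2(v)$. Consequently $\mathcal{N}_2(v) \sim \mathcal{N}_2(w)$ forces $\deg_G(v) = \deg_G(w)$ and $M_v = M_w$, so it is enough to show that the signatures $(\deg_G(v), M_v)$ are distinct across all vertices with high probability.

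By a union bound over the $\binom{n}{2}$ pairs it then suffices to establish $\Pr[\deg_G(v) = \deg_G(w),\ M_v = M_w] = o(n^{-2})$ for a single fixed pair. I would estimate this by first revealing $G[V \setminus \{v,w\}]$, which fixes an internal degree $\delta_u$ for every other vertex $u$; conditionally, each remaining vertex independently joins $N(v)$ with probability $p_n$ and $N(w)$ with probability $p_n$, so $N(v)$ and $N(w)$ become independent and identically distributed random subsets. Up to the correction from the edge $vw$ and from the roughly $n p_n^2$ vertices adjacent to both $v$ and $w$, the event $M_v = M_w$ reduces to the collision $H_v = H_w$ of two i.i.d.\ neighbor-degree histograms, whose probability equals $\sum_h \Pr[H_v = h]^2$, the $\ell_2$ collision probability of a single such histogram. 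The role of the hypothesis $n p_n / \log^2 n \to \infty$ is to guarantee, via Chernoff bounds and a union bound over vertices, that \emph{every} vertex has degree $(1 \pm o(1)) n p_n$ and that the neighbor-degree histogram is spread over $\Theta(\sqrt{n p_n})$ values with controlled cell counts; this concentration makes the histogram high-entropy uniformly over all vertices, which is what forces the collision probability below $n^{-2}$.

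The principal obstacle is controlling this collision probability \emph{simultaneously for every pair}, in particular for vertices whose degree sits in the tail of the degree distribution, where the histogram is less spread and collisions are likelier; it is precisely the need to dominate these atypical vertices in the union bound that pins down the required growth rate of $n p_n$. A secondary technical point is the dependence between the two neighborhoods, namely the shared common neighbors of $v$ and $w$ and the edge $vw$ itself, which perturb the degrees entering $M_v$ and $M_w$; I would absorb these into lower-order error terms, arguing that conditioning on $G[V \setminus \{v,w\}]$ renders the dominant contributions independent while the $O(n p_n^2)$ common neighbors contribute only a negligible correction. Once this per-pair bound is in place, the union bound closes the argument and Lemma~\ref{lemma:overlap} supplies the reconstruction algorithm.
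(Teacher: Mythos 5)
Your proposal matches the approach the paper attributes to this theorem: apply Lemma~\ref{lemma:overlap} with $r=3$ and show that with high probability no two vertices share the same ``degree neighborhood'' (central degree together with the multiset of neighbor degrees), which is exactly the invariant of $\mathcal{N}_2(v)$ you extract. The theorem is cited from \cite{Mossel2019} rather than proved here, but your reduction and the collision-probability/anticoncentration argument for the degree histograms is the same route that proof takes, so the proposal is essentially the paper's argument.
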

The proof of Theorem \ref{theorem:3-neighbourhoods-original} shows that no two
vertices $u, v \in V$ have the same degree neighborhoods with high
probability. (We say that two vertices $u, v$ have the same degree neighborhoods if they have the same degree and the degrees of their neighbors are equal as multi-sets.)

In this paper, we extend the results of \cite{Mossel2019}, by establishing regimes where reconstruction is possible and when it is not, given the collection of $1$- or $2$-neighborhoods.

\subsection{Main Results}\label{sec:main-results}
We first investigate reconstructablity from $1$-neighborhoods (Section \ref{sec:1-neighbourhoods}). First, we establish a range where reconstruction is possible using a ``fingerprinting'' idea from \cite{Yartseva2016}, which studied reconstruction in another random graph model.
\begin{theorem}\label{theorem:1-reconstructable}
Let $p_n = n^{-\alpha}$ for $0 < \alpha < \frac{1}{3}$. Then $G(n, p_n)$ is exactly reconstructable from its $1$-neighborhoods with high probability. 
\end{theorem}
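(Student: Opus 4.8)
The plan is to exploit the fact that every vertex is the labeled center of exactly one neighborhood. From $\mathcal{N}_1(v)$ we read off both $\deg(v)$ and the induced graph on the (unlabeled) neighbor set $N(v)$; in particular, for every neighbor $w$ of $v$ we know the codegree $c(v,w) := |N(v)\cap N(w)|$, which is the degree of the petal representing $w$ inside $\mathcal{N}_1(v)$, minus one. Since the centers are labeled, the edge set is recovered as soon as we know, for each $v$, the true label of each unlabeled petal. Thus the theorem reduces to showing that, with high probability, the collection of neighborhoods forces the correct petal-labeling, and cannot be consistently relabeled in any other way.

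First I would set up the identification as a fingerprinting scheme in the spirit of \cite{Yartseva2016}: assign to each vertex a signature that can be recomputed from inside \emph{any} neighborhood in which it appears as a petal, and use agreement of signatures to pin down petal labels. The natural candidate signature of a petal $w$ seen inside $\mathcal{N}_1(v)$ is the pair consisting of its codegree $c(v,w)$ together with the multiset of codegrees of its co-petals (the common neighbors of $v$ and $w$, which are themselves petals of $v$). One seeds the labeling at vertices whose local signature is globally unique, and then propagates: a petal is labeled once it agrees, on enough already-labeled common neighbors, with a unique candidate center. The true labeling is always consistent, so all the content lies in \emph{uniqueness}.

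The crux is therefore a first-moment estimate. I would isolate the \emph{ambiguous configurations} that obstruct a forced labeling --- roughly, pairs of distinct vertices that the propagation rule cannot tell apart as the petal of some common center --- and show that their expected number is a power of $n$ whose exponent is negative exactly when $\alpha<\frac{1}{3}$. The threshold $\frac{1}{3}$, rather than the information-theoretic value $\frac{1}{2}$ at which common neighborhoods become large (i.e.\ $np^2 = n^{1-2\alpha}\to\infty$), should emerge because the union bound runs over the $\Theta(n^2)$ candidate pairs and is won only when each coincidence is sufficiently rare; getting the exponent of the coincidence probability correct, so that it beats the $n^2$ pairs precisely at $\alpha = \frac{1}{3}$, is where the genuine calculation lies, and where the argument is presumably lossy relative to $\frac{1}{2}$.

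I expect the main obstacle to be twofold. The harder probabilistic point is controlling dependence: the codegrees $c(v,w)$ for the neighbors of a fixed $v$, and the signatures attached to overlapping neighborhoods, are far from independent, so the first-moment/union bound must be accompanied by concentration (a second-moment or Azuma-type argument, using that neighborhoods of well-separated vertices are nearly independent in $G(n,p_n)$) to justify that no ambiguous configuration occurs anywhere. The more structural point is to show that ruling out ambiguous configurations really does yield a \emph{unique} global labeling of $[n]$ without error cascades --- that local identifiability propagates consistently --- which I would handle by a peeling/induction argument, seeding at vertices with unique signatures and extending along common neighbors, using that for $\alpha<\frac{1}{2}$ each newly labeled petal is over-determined by the $np^2\to\infty$ common neighbors it shares with its center.
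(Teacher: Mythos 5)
Your plan shares the fingerprinting spirit of the paper's proof, but as written it has a genuine gap: the central first-moment estimate is never actually identified or carried out, and your heuristic for where the threshold $\alpha<\frac{1}{3}$ comes from is not the real mechanism. The paper fingerprints \emph{edges}, not vertices: for an edge $(u,v)$ the fingerprint is the isomorphism type of $H_{u,v}$, the subgraph induced on the common neighbors of $u$ and $v$, which is visible inside both $\mathcal{N}_1(u)$ and $\mathcal{N}_1(v)$. Once the fingerprints of distinct edges are pairwise non-isomorphic, reconstruction is immediate and purely pairwise --- declare $u$ and $v$ adjacent iff some $H_{u,u_0}$ is isomorphic to some $H_{v,v_0}$ --- so no seeding, propagation, or peeling is needed and the ``error cascade'' worry you raise simply does not arise. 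The uniqueness statement is proved by bounding $\mathbb{P}(H_{u,v}\sim H_{x,y})$ for distinct edges: an isomorphism would force the subgraph $G_1$ on the roughly $np_n^2=n^{1-2\alpha}$ common neighbors of $u,v$ (excluding those shared with $x,y$) to embed into the common neighborhood of $x,y$, and a union over the at most $(\lambda+2)^{\lambda}\approx\exp\left(n^{1-2\alpha}\log n\right)$ injections, each costing $p_n^{E(G_1)-O(\mu)}\approx\exp\left(-\alpha n^{2-5\alpha}\log n\right)$ for the not-yet-revealed edges, tends to $0$ exactly when $1-2\alpha<2-5\alpha$, i.e.\ $\alpha<\frac{1}{3}$.

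So the threshold does \emph{not} come from beating the polynomially many candidate pairs, as you suggest; once $\alpha<\frac{1}{3}$ the collision probability is stretched-exponentially small and the union bound over the at most $n^4$ pairs of edges is free. It comes from the internal competition, already for a \emph{single} pair of edges, between the entropy of choosing an embedding of one common neighborhood into the other (of order $n^{1-2\alpha}\log n$) and the evidence supplied by the $\Theta\left(p_n\binom{np_n^2}{2}\right)=\Theta\left(n^{2-5\alpha}\right)$ edges induced inside the common neighborhood. This also shows why the choice of fingerprint matters: your codegree-based signature (codegree of the petal plus the multiset of codegrees of its co-petals) discards precisely the induced edges of $H_{u,v}$ that supply the $n^{2-5\alpha}$ term, so it is not clear it separates at any explicit threshold, and you would still face the dependence issues you flag without the clean conditioning used in the paper (at most $2\mu+1$ edges of the target neighborhood are revealed in advance, so the embedding cost is genuinely $p_n^{k-2\mu-1}$). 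To complete your argument you would need to (i) define the ambiguous configurations precisely, (ii) perform the conditional first-moment computation handling the overlap between the two neighborhoods, and (iii) verify that your weaker signature still wins at $\alpha<\frac{1}{3}$; none of these steps is routine, and (iii) may fail.
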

We outline the proof. For vertices $u, v$ such that $(u,v) \in E$, let $H_{u,v}$ denote the graph induced by the common neighbors of $u$ and $v$. 
The graph $H_{u,v}$ is a ``fingerprint'' for the edge $(u,v)$ (note that $H_{u,v} \sim H_{v,u}$). The following lemma establishes that under a uniqueness of fingerprints condition, the graph can be reconstructed.
\begin{lemma}[Fingerprint Lemma]\label{lemma:fingerprint}
Suppose $H_{u,v} \sim H_{x,y}$ if and only if $(u,v)$ and $(x,y)$ are the same edge. Then we can exactly reconstruct $G$ from the collection of $1$-neighborhoods.
\end{lemma}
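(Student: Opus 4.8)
The plan is to use the fingerprints as labels for the otherwise unlabeled neighbors appearing in each $1$-neighborhood, and thereby recover, for every edge, the labels of both of its endpoints. Recall that the set of centers is exactly $V = [n]$ and every label is observed, so it suffices to determine the full edge set $E$.

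The first step is to note that for a vertex $u$ and a neighbor $w$ of $u$, the fingerprint $H_{u,w}$ can be computed from the single neighborhood $\mathcal{N}_1(u)$, even though $w$ is presented unlabeled. Indeed, the common neighbors of $u$ and $w$ are precisely those neighbors of $u$ that are adjacent to $w$, and both the set $N(u)$ of neighbors of $u$ and all edges among them are visible inside $\mathcal{N}_1(u)$ (which is the subgraph induced on $\{u\}\cup N(u)$). Hence the induced subgraph on the common neighbors, namely $H_{u,w}$, is determined by $\mathcal{N}_1(u)$ alone. I would therefore form, for each center $u$, the collection of isomorphism types
\[
\mathcal{F}_u \;=\; \{\, H_{u,w} \;:\; w \in N(u) \,\},
\]
one fingerprint for each edge incident to $u$; by the uniqueness hypothesis these are pairwise non-isomorphic within $\mathcal{F}_u$.

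The second step is the matching rule: declare $(u,w)\in E$ if and only if $\mathcal{F}_u$ and $\mathcal{F}_w$ contain a common isomorphism type. The forward implication is immediate, since if $(u,w)\in E$ then $H_{u,w}\sim H_{w,u}$ lies in both $\mathcal{F}_u$ and $\mathcal{F}_w$. For the reverse implication I would invoke the uniqueness hypothesis: if some $F\in\mathcal{F}_u\cap\mathcal{F}_w$, then $F\sim H_{u,w'}$ for a neighbor $w'$ of $u$ and $F\sim H_{w,u'}$ for a neighbor $u'$ of $w$, whence $H_{u,w'}\sim H_{w,u'}$; the hypothesis forces $(u,w')$ and $(w,u')$ to be the same edge, and since $u\neq w$ this gives $w'=w$ and $u'=u$, so $(u,w)\in E$. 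Carrying this out over all pairs of centers recovers $E$ exactly, and together with the known labels this yields exact reconstruction of $G$.

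The main obstacle is the reverse direction of the matching rule, which is precisely where the uniqueness of fingerprints is essential: without it, two distinct edges could produce isomorphic fingerprints and a spurious match would create a phantom edge. One should also check the degenerate cases — for instance edges whose common-neighbor graph is empty — but these are automatically controlled by the hypothesis, which asserts distinctness of fingerprints across all edges, so at most one edge can carry any given (including empty) isomorphism type.
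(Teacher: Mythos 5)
Your proposal is correct and follows essentially the same argument as the paper: compute each fingerprint $H_{u,w}$ from $\mathcal{N}_1(u)$ alone, declare a pair connected when their neighborhoods exhibit isomorphic fingerprints, and use the uniqueness hypothesis to rule out spurious matches in the reverse direction. The added details (why the fingerprint is visible inside a single neighborhood, and the handling of the identification $\{u,w'\}=\{w,u'\}$) are consistent with, and slightly more explicit than, the paper's proof.
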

Given the Fingerprint Lemma, it remains to establish the uniqueness of the $H_{u,v}$ graphs. 

We then show a negative result about the reconstructability of $G(n, p_n)$ from $1$-neighborhoods using a counting argument.
\begin{theorem}\label{theorem:1-non-reconstructable-part-1}
Let $p_n = n^{-\alpha}$ for $\frac{1}{2} < \alpha < 1$. Then with high probability, $G(n, p_n)$ cannot be reconstructed from its $1$-neighborhoods.
\end{theorem}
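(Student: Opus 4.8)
The plan is to prove non-reconstructability by an entropy/counting argument: I will show that the number of distinct \emph{$1$-neighborhood profiles} is far smaller than the number of isomorphism classes of typical graphs, so that with high probability the profile of $G$ is shared by some non-isomorphic $H$ on the same vertex set $[n]$. Concretely, for a graph on $[n]$ let $\Phi(G) = \left(\left[\mathcal{N}_1(v)\right]\right)_{v \in [n]}$ record, for each labeled vertex $v$, the rooted-isomorphism type of $\mathcal{N}_1(v)$. Then $G \sim_1 H$ holds exactly when $\Phi(G) = \Phi(H)$, and $G$ is reconstructable iff its profile is shared only by graphs isomorphic to $G$. Thus it suffices to upper bound the number of realizable profiles and compare it to the typical entropy of $G(n,p_n)$.

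First I would fix a typical set $\mathcal{T}$ with $\Pr[G \in \mathcal{T}] \to 1$ by standard concentration: (i) every degree is at most $D := (1+\epsilon)n^{1-\alpha}$ (Chernoff plus a union bound, using $n^{1-\alpha} \gg \log n$); (ii) the number of edges among the neighbors of any vertex --- equivalently the number of triangles through that vertex --- is at most $t$, where with high probability $t = O\left(\max(1,n^{2-3\alpha})\log n\right)$, via a union bound over vertices since this count has mean $\Theta(n^{2-3\alpha})$; and (iii) the edge count lies in $(1\pm\epsilon)Np_n$, where $N = \binom{n}{2}$.

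Next I would bound the number of profiles using the sparsity of neighborhoods. A $1$-neighborhood type is determined by the degree $d \le D$ of the root together with the unlabeled graph it induces on its neighbors, which on $\mathcal{T}$ has at most $t$ edges. Hence the number of admissible types is at most $D\sum_{j \le t}\binom{\binom{D}{2}}{j}$, whose logarithm is $O\left(t\log(D^2/t)\right) = O\left(n^{2-3\alpha}\log n\right)$ for $\tfrac12 < \alpha < \tfrac23$ and $O\left((\log n)^2\right)$ for $\alpha \ge \tfrac23$. Multiplying by $n$,
\[
\log_2(\#\mathrm{profiles}) \le n\,\log_2(\#\mathrm{types}) = O\left(n^{\max(1,\,3-3\alpha)}(\log n)^2\right),
\]
while the number of labeled graphs with a typical edge count is $2^{(1-o(1))\frac{\alpha}{2}n^{2-\alpha}\log_2 n}$. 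The decisive inequality is $\max(1,\,3-3\alpha) < 2-\alpha$, which holds precisely for $\alpha \in \left(\tfrac12,1\right)$ (with equality at $\alpha = \tfrac12$), so the profile count is exponentially negligible next to the graph entropy.

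Finally I would convert this into the probability statement. Since distinct reconstructable isomorphism classes must have distinct profiles, the number of labeled reconstructable graphs in $\mathcal{T}$ is at most $(\#\mathrm{profiles})\cdot n!$, each having $\Pr[G=H] = p_n^{|E(H)|}(1-p_n)^{N-|E(H)|} \le 2^{-c\,n^{2-\alpha}\log_2 n}$ for some $c > 0$ when $H \in \mathcal{T}$. Therefore
\[
\Pr[G \text{ reconstructable}] \le \Pr[G \notin \mathcal{T}] + (\#\mathrm{profiles})\,n!\,\max_{H \in \mathcal{T}}\Pr[G=H] \to 0,
\]
because $\log_2\left((\#\mathrm{profiles})\,n!\right) = O\left(n^{\max(1,\,3-3\alpha)}(\log n)^2 + n\log n\right) = o\left(n^{2-\alpha}\log n\right)$. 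I expect the main obstacle to be the sparse-neighborhood type count: establishing the high-probability bound $t$ on the triangles through a vertex and using it to replace the trivial estimate $2^{\binom{D}{2}}$ (which would only yield the threshold at $\alpha = 1$), together with the bookkeeping needed to handle the non-uniform $G(n,p_n)$ measure. It is exactly the sparsity of the neighborhoods for $\alpha > \tfrac12$ that makes the profile count negligible, which is why the threshold appears at $\tfrac12$.
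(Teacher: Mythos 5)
Your proposal is correct and is essentially the paper's argument: both bound the number of possible $1$-neighborhood profiles via high-probability bounds on degrees ($O(n^{1-\alpha})$) and on edges among each vertex's neighbors ($\tilde{O}(\max(1,n^{2-3\alpha}))$), multiply by $n!$ for the isomorphism classes, and compare against the $\exp(\Theta(n^{2-\alpha}\log n))$ count of typical graphs, with the decisive comparison $3-3\alpha < 2-\alpha$ giving the threshold $\alpha > \frac{1}{2}$. The only cosmetic differences are that the paper phrases the argument in terms of an arbitrary reconstruction algorithm and conditions on the edge count $|E|=m$ (using uniformity over $m$-edge graphs), whereas you work directly with the injectivity of the profile map on reconstructable graphs and a max-probability bound over the typical set; these are equivalent.
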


Next, we consider reconstruction from $2$-neighborhoods (Section \ref{sec:2-neighbourhoods}). Theorem \ref{theorem:2-reconstructable} establishes that $G(n,p_n)$ is exactly reconstructable from $2$-neighborhoods for $p_n = n^{-\alpha}$ where $0 < \alpha < \frac{3}{5}$.  When $0 < \alpha < \frac{1}{2}$, the graph has diameter $2$ with high probability (Lemma \ref{lemma:diameter-2}). We then construct a canonical labeling, which is guaranteed by Lemma \ref{lemma:canonical-labelling}. For values $\frac{1}{2} < \alpha < \frac{3}{5}$, we demonstrate reconstructability by a fingerprint argument similar to the proof of Theorem \ref{theorem:1-reconstructable}.

\begin{theorem}\label{theorem:2-reconstructable}
Let $p_n = n^{-\alpha}$ for $\alpha \in \left(0, \frac{1}{2}\right) \cup \left(\frac{1}{2}, \frac{3}{5}\right)$. Then $G(n, p_n)$ is exactly reconstructable from its $2$-neighborhoods with high probability. Moreover, there is an efficient algorithm for reconstruction when $\alpha < \frac{1}{2}$.
\end{theorem}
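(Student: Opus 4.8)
The plan is to split the proof of Theorem~\ref{theorem:2-reconstructable} into two regimes according to the value of $\alpha$, since the structure of $G(n,p_n)$ differs qualitatively across the threshold $\alpha = \tfrac{1}{2}$.

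\textbf{The regime $0 < \alpha < \tfrac{1}{2}$.} First I would establish that $G(n,p_n)$ has diameter $2$ with high probability (this is Lemma~\ref{lemma:diameter-2}, which I assume). The point of this is that when the diameter is $2$, the single $2$-neighborhood $\mathcal{N}_2(v)$ of \emph{any} vertex $v$ already contains all of $V$ and all of $E$: the structure of $G$ is fully visible, but only up to relabeling the non-central vertices. The entire difficulty therefore collapses to producing a canonical labeling, i.e.\ a labeling of the vertices of each observed neighborhood that can be computed intrinsically from the (unlabeled) graph structure and that agrees across different neighborhoods. This is exactly what Lemma~\ref{lemma:canonical-labelling} is meant to supply. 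Concretely, I would argue that with high probability every vertex $u$ is uniquely distinguished by some graph invariant --- for instance the pair consisting of its degree together with the multiset of degrees of its neighbors (the ``degree neighborhood'' invariant already used in Theorem~\ref{theorem:3-neighbourhoods-original}), or a refinement thereof via color refinement / Weisfeiler--Leman. Since $np_n = n^{1-\alpha} \to \infty$, degrees concentrate around $np_n$ with fluctuations of order $\sqrt{np_n}$, and a second-moment computation should show that no two vertices share the same refined invariant with high probability. Given such a canonical invariant, the labeling algorithm is: fix any vertex $v$, read off $\mathcal{N}_2(v) = G$ as an unlabeled graph, compute the invariant of every vertex, and match each vertex to its globally unique label. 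This also yields the promised \emph{efficient} algorithm, since color refinement runs in polynomial time.

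\textbf{The regime $\tfrac{1}{2} < \alpha < \tfrac{3}{5}$.} Here the diameter is no longer $2$, so a single neighborhood does not reveal the whole graph, and I would instead mirror the fingerprinting strategy of Theorem~\ref{theorem:1-reconstructable}. The extra radius buys a richer fingerprint: for an edge $(u,v)$ one can associate not merely the graph induced on common neighbors but the richer structure visible within $\mathcal{N}_2$, which encodes common neighbors together with their adjacencies. I would state a $2$-neighborhood analogue of the Fingerprint Lemma (Lemma~\ref{lemma:fingerprint}) asserting that if these enriched fingerprints are pairwise non-isomorphic across distinct edges, then $G$ is exactly reconstructable, and then verify the uniqueness condition by a union bound over pairs of edges. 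The core estimate is a first-moment / second-moment bound showing that the probability two distinct fingerprints coincide, summed over all $O(n^4)$ candidate pairs of edges, tends to $0$; this is where the constraint $\alpha < \tfrac{3}{5}$ enters, since it guarantees enough common-neighbor structure (the relevant counts scale like $n\,p_n^2$ and higher moments) to make each fingerprint statistically distinctive.

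\textbf{Main obstacle.} I expect the genuinely delicate part to be the uniqueness-of-fingerprints estimate in the second regime: the expected size of the common-neighbor structure shrinks as $\alpha$ grows, so near $\alpha = \tfrac{3}{5}$ the fingerprints become small and the second-moment computation controlling collisions between the $\binom{|E|}{2} = \Theta(n^{2(2-\alpha)})$ pairs of edges is tight. One must carefully handle correlated edge pairs --- those sharing a vertex, and those whose common-neighbor sets overlap --- and show that even the worst-behaved correlation class contributes $o(1)$ to the expected number of fingerprint collisions. A secondary subtlety is verifying that the canonical-invariant argument in the first regime truly distinguishes \emph{all} pairs of vertices simultaneously (a statement about the minimum gap of an invariant over $\binom{n}{2}$ pairs), rather than merely a typical pair, which again requires a union bound backed by sharp tail estimates on the degree-neighborhood statistics.
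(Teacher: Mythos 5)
Your treatment of the regime $0 < \alpha < \tfrac{1}{2}$ is essentially the paper's proof: diameter $2$ via Lemma \ref{lemma:diameter-2}, so any single $\mathcal{N}_2(v)$ shows the whole graph, and then the canonical labeling by degree neighborhoods from Lemma \ref{lemma:canonical-labelling} (which already comes with the simultaneous, union-bounded uniqueness statement you worry about, and is efficient). That half is fine.

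The gap is in the regime $\tfrac{1}{2} < \alpha < \tfrac{3}{5}$: you have not identified the right fingerprint, and the scaling you invoke to explain the threshold is wrong. You describe the enriched fingerprint as ``common neighbors together with their adjacencies'' and say the relevant counts scale like $n p_n^2$. But $n p_n^2 = n^{1-2\alpha} \to 0$ for $\alpha > \tfrac{1}{2}$, so a typical edge $(u,v)$ has \emph{no} common neighbors at all in this regime; any fingerprint built on the common-neighbor set (or its induced subgraph, however enriched) is empty with high probability and distinguishes nothing. The object that actually works --- and the one the paper uses --- is $L_{u,v}$, the subgraph induced by the vertices at distance \emph{exactly} $2$ from both $u$ and $v$. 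This set has size $\Theta\bigl(n \cdot (n p_n^2)^2\bigr) = \Theta\bigl(n^{3-4\alpha}\bigr)$, which is large for $\alpha < \tfrac{3}{4}$, and its induced subgraph has $\Theta\bigl(n^{6-8\alpha} p_n\bigr) = \Theta\bigl(n^{6-9\alpha}\bigr)$ edges. The threshold $\tfrac{3}{5}$ then falls out of a concrete comparison in the first-moment bound: the number of ways to embed one fingerprint into another costs roughly $\exp(\lambda \log n)$ with $\lambda = \Theta(n^{3-4\alpha})$ vertices, while each embedding must account for $k = \Theta(n^{6-9\alpha})$ unrevealed edges at probability $p_n$ each, so one needs $6 - 9\alpha > 3 - 4\alpha$, i.e.\ $\alpha < \tfrac{3}{5}$. (One must also check that the $O(n^{1-\alpha})$ edges of $G_2$ revealed when exposing $\mathcal{N}_1(u) \cup \mathcal{N}_1(v)$ do not swamp the $n^{6-9\alpha}$ unrevealed ones, which holds for $\alpha < \tfrac{5}{8}$.) Without locating the fingerprint at distance exactly $2$ and carrying out this exponent comparison, your outline cannot produce the claimed range, and following your stated $n p_n^2$ heuristic would lead you back to the $\alpha < \tfrac{1}{3}$ condition of Theorem \ref{theorem:1-reconstructable} rather than to $\tfrac{3}{5}$. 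As a minor point, the collision estimate is a first-moment (union-bound) argument over $O(n^4)$ ordered pairs of edges with per-pair probability $o(n^{-4})$, not a second-moment computation.
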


On the other hand, when $\frac{3}{4} < \alpha < 1$, we show by a counting argument that $G(n, p_n)$ cannot be reconstructed from its $2$-neighborhoods. 
\begin{theorem}\label{theorem:2-non-reconstructable}
Let $p_n = n^{-\alpha}$ for $\frac{3}{4} < \alpha < 1$. Then with high probability, $G(n, p_n)$ cannot be reconstructed from its $2$-neighborhoods.
\end{theorem}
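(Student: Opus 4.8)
The plan is to produce, with high probability, a degree-preserving local rewiring (a $2$-switch) that leaves every vertex's $2$-neighborhood unchanged up to isomorphism, yet changes the isomorphism type of the whole graph; its existence then witnesses non-reconstructability. Call the isomorphism class of $\mathcal{N}_2(v)$ the \emph{type} $\tau(v)$. I would search for two edges $(a,b),(c,d) \in E$ with $a,b,c,d$ distinct, whose radius-$2$ balls $\mathcal{N}_2(a),\dots,\mathcal{N}_2(d)$ are pairwise disjoint and are trees, and whose degrees satisfy $\deg(a)=\deg(c)$ and $\deg(b)=\deg(d)$. Let $G'$ be obtained from $G$ by deleting $(a,b),(c,d)$ and inserting $(a,d),(c,b)$. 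I claim $G \sim_2 G'$. Both structural hypotheses — disjointness of the radius-$2$ balls and their being trees — hold simultaneously for a typical pair of edges exactly when $\alpha>\frac34$, since the expected number of $4$-cycles through a vertex and the expected overlap of two radius-$2$ balls are both of order $n^{3-4\alpha}=o(1)$ in this range, which is where the threshold $\frac34$ enters.

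To verify $G\sim_2 G'$, note that the only vertices whose $2$-neighborhood can change lie within distance $2$ of $\{a,b,c,d\}$, and by disjointness these split into four non-interacting clusters. In $\mathcal{N}_2(a)$ the branch hanging off the neighbor $b$ is a star on $\deg(b)-1$ leaves (by tree-likeness), and after the switch it is replaced by the branch off $d$, a star on $\deg(d)-1$ leaves; since $\deg(b)=\deg(d)$ the rooted tree is unchanged, and symmetrically for $\mathcal{N}_2(c)$, while $\mathcal{N}_2(b),\mathcal{N}_2(d)$ are preserved using $\deg(a)=\deg(c)$. For every other affected vertex $u$, the vertices $b,d$ (respectively $a,c$) occur only as pure leaves on the boundary of $\mathcal{N}_2(u)$, so exchanging them is a rooted-tree isomorphism and needs no degree hypothesis. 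Hence all $2$-neighborhoods agree, i.e.\ $\tau_G(v)=\tau_{G'}(v)$ for every $v$.

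The crux is to certify $G\not\sim G'$, which is delicate precisely because the switch is engineered to be locally invisible: it preserves the degree sequence (forced, as degrees are read off from $\mathcal{N}_1(v)\subseteq\mathcal{N}_2(v)$) and every subgraph count depending only on structure within distance $2$ of an edge. The key observation is that $\mathcal{N}_2(u)$ does \emph{not} reveal the types of the vertices lying at distance $2$ from $u$ (that would require their own $2$-neighborhoods, reaching distance $4$), so the ``typed distance-$2$ profile''
\[
\Pi(H) \;=\; \#\big\{\{u,v\}: \mathrm{dist}_H(u,v)=2,\ \{\tau(u),\tau(v)\}=\{s,t\}\big\},\qquad \{s,t\}\ \text{an unordered type-pair},
\]
is an isomorphism invariant of $H$ that is \emph{not} determined by the collection of $2$-neighborhoods. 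I would therefore strengthen the choice of switch so that, in addition to $\deg(a)=\deg(c)$ and $\deg(b)=\deg(d)$, one has $\tau(b)\neq\tau(d)$ and the multisets of neighbor-types of $a$ and of $c$ differ. Enumerating the $O(n^{1-\alpha})$ pairs that enter or leave distance $2$ under the switch — for instance $a$ exchanges its distance-$2$ partners $N(b)\setminus\{a\}$ for $N(d)\setminus\{c\}$ — one checks that $\Pi(G)\neq\Pi(G')$ for a suitable type-pair $\{s,t\}$, because the type-mismatches block the otherwise-symmetric contributions from cancelling. Since $\tau_G\equiv\tau_{G'}$, the two profiles use identical per-vertex types, so $\Pi(G)\neq\Pi(G')$ forces $G\not\sim G'$.

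It remains to show such a configuration exists with high probability, which is a routine first/second-moment computation: degrees concentrate around $n^{1-\alpha}$, so each attained degree is shared by polynomially many vertices of varying type, and by the disjointness estimate a positive fraction of edge-pairs are tree-like and well-separated, so the extra type constraints are satisfiable. I expect the non-isomorphism step to be the main obstacle — namely verifying that the net change in $\Pi$ is genuinely nonzero while simultaneously respecting all invisibility constraints — since one must thread between making the rewiring undetectable at radius $2$ and detectable in the global distance-$2$ type profile, and control the $\pm$ contributions of all $O(n^{1-\alpha})$ affected pairs without accidental cancellation.
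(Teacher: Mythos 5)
Your construction has a genuine gap that kills it on a large part of the claimed range: the tree-likeness hypothesis fails for $\frac{3}{4} < \alpha < \frac{4}{5}$. You justify it by counting $4$-cycles through a vertex ($\Theta(n^{3-4\alpha}) = o(1)$), but the dominant obstruction to $\mathcal{N}_2(v)$ being a tree is the set of edges \emph{among the distance-$2$ vertices}: there are $\Theta\left((np_n)^2\right) = \Theta(n^{2-2\alpha})$ such vertices, so the induced subgraph on them contains $\Theta\left(\binom{n^{2-2\alpha}}{2} p_n\right) = \Theta(n^{4-5\alpha})$ edges in expectation, each of which closes a cycle inside the $2$-ball. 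For $\alpha < \frac{4}{5}$ this diverges, and in fact the probability that any fixed vertex has a tree $2$-neighborhood is super-polynomially small, so with high probability \emph{no} vertex (let alone four well-separated ones) satisfies your structural hypothesis. The same issue contaminates the invisibility verification itself: for a neighbor $u$ of $a$, you need $b$ to appear as a pure leaf of $\mathcal{N}_2(u)$, i.e.\ no vertex of $N(b)\setminus\{a\}$ may lie within distance $2$ of $u$, which amounts to excluding $5$-cycles through the edge $(a,b)$ --- again of expected count $n^{4-5\alpha}$. So at best your switch could be made to work for $\alpha > \frac{4}{5}$, not $\alpha > \frac{3}{4}$, and the coincidence of your threshold with the paper's is an artifact of the undercount. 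Separately, the non-isomorphism step (showing the typed distance-$2$ profile $\Pi$ genuinely changes) is only sketched; the logic ``$\Pi$ is an isomorphism invariant not determined by the $2$-neighborhoods, so make it change'' is sound, but you have not verified that the degree constraints forced by invisibility are compatible with type constraints that prevent the four families of created/destroyed distance-$2$ pairs from cancelling, and you yourself flag this as the main obstacle. As written, the proposal does not constitute a proof on any subinterval.

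For contrast, the paper avoids exhibiting any explicit second graph $G'$: it is a pure counting argument. One defines a set $S$ of ``typical'' $2$-neighborhood collections (degrees at most $q_n n$, and each $2$-neighborhood having at most $\frac{1}{2}q_n^4 t_n n^4 + X(v) - 1$ edges), shows $\mathcal{N}_2(G) \in S$ with high probability, bounds $|S|$ by enumerating neighborhoods via a BFS/stars-and-bars encoding, and compares $n!\,|S|$ with the number $\binom{\binom{n}{2}}{m}$ of graphs with a typical edge count $m$. The decisive exponent comparison is $n^{5+c-5\alpha}\log n$ (from placing the roughly $n^{4-5\alpha}$ extra edges in each of the $n$ neighborhoods --- precisely the quantity your tree-ness claim ignores) against $n^{2-\alpha}\log n$, which is where $\alpha > \frac{3}{4}$ actually comes from. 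This non-constructive route tolerates the many cycles inside each $2$-ball because it only needs their total description length to be small, not their absence.
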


These above results do not cover $\alpha \in \left\{\frac{1}{2}\right\} \cup \left [\frac{3}{5}, \frac{3}{4} \right]$. Proposition \ref{proposition:overlap} gives a partial answer for the range $\alpha \in \left( \frac{2}{3}, \frac{3}{4} \right]$.
\begin{proposition}\label{proposition:overlap}
Let $p_n = n^{-\alpha}$ for $\frac{2}{3} < \alpha < 1$. Then with high probability, $G(n, p_n)$ cannot be exactly reconstructed from its $2$-neighborhoods using the overlap method.
\end{proposition}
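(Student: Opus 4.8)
The plan is to show that when $\frac{2}{3} < \alpha < 1$, the hypothesis of Lemma~\ref{lemma:overlap} with $r = 2$ fails with high probability: that is, there exist distinct vertices $v \neq w$ with $\mathcal{N}_1(v) \sim \mathcal{N}_1(w)$. Since the overlap method labels a neighbor $v_0$ of a center by identifying the unique vertex whose $1$-neighborhood matches $\mathcal{N}_1(v_0)$, a collision of this kind makes the labeling ambiguous, so the method cannot recover the true vertex labels and hence cannot exactly reconstruct $G$.

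The key structural observation is that in this regime almost every $1$-neighborhood is a star. A vertex $v$ has a non-star $1$-neighborhood exactly when two of its neighbors are adjacent, i.e.\ when $v$ lies in a triangle. The expected number of triangles is $\binom{n}{3} p_n^3 = \Theta(n^{3-3\alpha})$, which is $o(n)$ precisely because $\alpha > \frac{2}{3}$. By Markov's inequality the number of triangles $T$ satisfies $T = o(n)$ with high probability, so the number of vertices lying in a triangle is at most $3T = o(n)$. Consequently, at least $(1 - o(1))n$ vertices have a $1$-neighborhood isomorphic to a star $K_{1,d}$, which is determined up to isomorphism (with its labeled center) by its degree $d$ alone.

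Next I would bound the number of distinct degrees. Since each degree is $\mathrm{Bin}(n-1, p_n)$ with mean $\Theta(n^{1-\alpha})$, a Chernoff bound together with a union bound over the $n$ vertices shows that with high probability the maximum degree is $O(n^{1-\alpha}\,\mathrm{polylog}(n)) = o(n)$; hence every vertex degree lies in a set of size $o(n)$. On the intersection of these two high-probability events, the $(1-o(1))n$ star vertices take at most $o(n)$ distinct degree values, so by pigeonhole at least two star vertices $v \neq w$ share a common degree $d$. Their $1$-neighborhoods are then both isomorphic to $K_{1,d}$, giving $\mathcal{N}_1(v) \sim \mathcal{N}_1(w)$ and defeating the overlap method.

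The routine calculations are the triangle first-moment bound and the degree concentration; the combinatorial core is a one-line pigeonhole once the star picture and the degree window are in place. The only point requiring care is the interpretation step, which I expect to be the main obstacle. I would make explicit that the overlap method is precisely the procedure of Lemma~\ref{lemma:overlap}, whose correctness hinges on the pairwise non-isomorphism of the $1$-neighborhoods; once that hypothesis is violated, a neighbor sharing its $1$-neighborhood with another vertex cannot be assigned a unique label, so exact reconstruction by this method is impossible. It is worth noting that this argument stops working at $\alpha = \frac{2}{3}$, where $T = \Theta(n)$ forces a constant fraction of non-star neighborhoods, which explains the stated range.
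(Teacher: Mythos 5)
Your proposal is correct and follows essentially the same route as the paper's proof: show that all but $o(n)$ of the $1$-neighborhoods are stars of degree $o(n)$ (you via a triangle first-moment bound and a Chernoff maximum-degree bound, the paper via per-vertex Markov bounds on the degree and on the edges among neighbors), then pigeonhole on the degree to produce two vertices with isomorphic $1$-neighborhoods, which defeats the overlap method. The only difference is quantitative: the paper extracts $\Theta(n^{\beta})$ stars of a common degree, while you only need two, which suffices for the stated proposition.
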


Finally, recall that the center of each neighborhood was assumed to be labeled. In Section \ref{section:find-center}, we show how to find the center if it is unlabeled.

\subsection{Follow-up work}
After posting of this paper to arXiv, Huang and Tikhomirov \cite{Huang2021} contributed further results on the reconstruction of \ER random graphs from $1$-neighborhoods. Their first result refined Theorem \ref{theorem:1-non-reconstructable-part-1}, showing that $G(n, p_n)$ is a.a.s. non-reconstructable from its $1$-neighborhoods if $p_n$ satisfies $p_n = \omega(n^{-1} \log n)$ and $p_n = o(n^{-\frac{1}{2}})$. Their second result showed that there exist universal constants $C, c > 0$ such that if $n^{-\frac{1}{2}} \log^C(n) \leq p_n \leq \frac{c}{n}$ for large $n$, then $G(n, p_n)$ is a.a.s. reconstructable from its $1$-neighborhoods. Therefore, $p = \tilde{\Theta}(n^{-\frac{1}{2}})$ marks the transition between reconstructability and non-reconstructability, addressing an open problem that we had raised.

\subsection{Preliminaries}
We collect some results that will be used to establish the theorems.
\begin{lemma}[Chernoff Bound \cite{Mitzenmacher2017}]\label{lemma:chernoff}
Let $\{X_i\}_{i=1}^m$ be independent indicator random variables, and let $X = \sum_{i=1}^m X_i$. Then for any $\epsilon > 0$,
\[\mathbb{P}\left(X \leq (1-\epsilon) \mathbb{E}[X]\right) \leq \exp \left(-\frac{\epsilon^2}{2} \mathbb{E}[X]\right) \text{~~and~~} \mathbb{P}\left(X \geq (1+\epsilon) \mathbb{E}[X]\right) \leq \exp \left(-\frac{\epsilon^2}{2 + \epsilon} \mathbb{E}[X]\right).\]
\end{lemma}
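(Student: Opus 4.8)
The plan is to use the standard exponential-moment (Chernoff–Cram\'er) method, treating the two tails separately but in parallel. Write $\mu = \mathbb{E}[X] = \sum_{i=1}^m p_i$, where $p_i = \mathbb{P}(X_i = 1)$. For the upper tail, I would fix any $t > 0$ and apply Markov's inequality to the nonnegative random variable $e^{tX}$, giving
\[
\mathbb{P}\left(X \geq (1+\epsilon)\mu\right) = \mathbb{P}\left(e^{tX} \geq e^{t(1+\epsilon)\mu}\right) \leq e^{-t(1+\epsilon)\mu}\,\mathbb{E}\left[e^{tX}\right].
\]
By independence, $\mathbb{E}[e^{tX}] = \prod_{i=1}^m \mathbb{E}[e^{tX_i}] = \prod_{i=1}^m \left(1 + p_i(e^t - 1)\right)$, and the elementary bound $1 + x \leq e^x$ applied to each factor yields $\mathbb{E}[e^{tX}] \leq \exp\left(\mu(e^t - 1)\right)$. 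Substituting this in gives $\mathbb{P}(X \geq (1+\epsilon)\mu) \leq \exp\left(\mu(e^t - 1) - t(1+\epsilon)\mu\right)$ for every $t > 0$.

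The next step is to optimize over $t$. Minimizing $e^t - 1 - t(1+\epsilon)$ sets $t = \ln(1+\epsilon)$, which produces the classical bound $\mathbb{P}(X \geq (1+\epsilon)\mu) \leq \left(\frac{e^\epsilon}{(1+\epsilon)^{1+\epsilon}}\right)^\mu$. The lower tail is handled symmetrically: for $t > 0$ I would apply Markov to $e^{-tX}$, use the same product factorization and the bound $1 + x \leq e^x$ to get $\mathbb{P}(X \leq (1-\epsilon)\mu) \leq \exp\left(\mu(e^{-t} - 1) + t(1-\epsilon)\mu\right)$, and then set $t = -\ln(1-\epsilon)$ to obtain $\left(\frac{e^{-\epsilon}}{(1-\epsilon)^{1-\epsilon}}\right)^\mu$.

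It remains to convert these sharp-but-unwieldy expressions into the clean exponential forms stated in the lemma, and this analytic step is where the real work lies. For the upper tail I would show $(1+\epsilon)\ln(1+\epsilon) - \epsilon \geq \frac{\epsilon^2}{2+\epsilon}$ for all $\epsilon \geq 0$; defining $g(\epsilon)$ to be the difference of the two sides, one checks $g(0) = 0$ and verifies $g'(\epsilon) \geq 0$ by clearing denominators and reducing to an inequality whose sign is controlled by a further derivative comparison (or, equivalently, by the known series bound $\ln(1+\epsilon) \geq \frac{2\epsilon}{2+\epsilon}$). For the lower tail I would prove $(1-\epsilon)\ln(1-\epsilon) + \epsilon \geq \frac{\epsilon^2}{2}$ on $0 \leq \epsilon < 1$ by the same monotonicity argument, expanding $\ln(1-\epsilon) = -\sum_{k \geq 1}\epsilon^k/k$ so that the left side equals $\sum_{k \geq 2}\frac{\epsilon^k}{k(k-1)}$, whose leading term is exactly $\frac{\epsilon^2}{2}$ with all remaining terms nonnegative. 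The main obstacle is precisely these two scalar inequalities: the exponential-moment machinery is routine, but the passage from the optimized Cram\'er transform to the advertised bounds $\exp\left(-\frac{\epsilon^2}{2+\epsilon}\mu\right)$ and $\exp\left(-\frac{\epsilon^2}{2}\mu\right)$ requires the careful calculus estimates above, and one must be attentive that the lower-tail bound holds for all $\epsilon \in (0,1)$ while noting the upper-tail bound is valid for all $\epsilon > 0$.
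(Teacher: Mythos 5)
Your proof is correct, and it matches the paper's treatment: the paper states this lemma without proof, citing Mitzenmacher and Upfal, and the argument in that reference is exactly your exponential-moment derivation (Markov's inequality applied to $e^{\pm tX}$, factorization by independence together with $1+x \leq e^x$, optimization at $t = \ln(1+\epsilon)$ resp.\ $t = -\ln(1-\epsilon)$, and the two scalar inequalities $(1+\epsilon)\ln(1+\epsilon)-\epsilon \geq \frac{\epsilon^2}{2+\epsilon}$, which follows from your cited bound $\ln(1+\epsilon) \geq \frac{2\epsilon}{2+\epsilon}$, and $(1-\epsilon)\ln(1-\epsilon)+\epsilon = \sum_{k \geq 2}\frac{\epsilon^k}{k(k-1)} \geq \frac{\epsilon^2}{2}$). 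The one loose end is the trivial case $\epsilon \geq 1$ in the lower tail, which the lemma's ``any $\epsilon > 0$'' formally includes: for $\epsilon = 1$ one has $\mathbb{P}\left(X \leq 0\right) = \prod_{i=1}^m (1-p_i) \leq e^{-\mathbb{E}[X]} \leq e^{-\mathbb{E}[X]/2}$, and for $\epsilon > 1$ the left-hand probability vanishes (when $\mathbb{E}[X] > 0$) since $X \geq 0$.
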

\begin{corollary}\label{corollary:chernoff}
Let $X$ be a binomial random variable with parameters $(n, n^{-\beta})$ for constant $\beta > 0$. For any constant $c > 0$,
\[\mathbb{P}\left(X \geq n^{1-\beta + c} \right) \leq \exp \left( - \frac{\left(n^c -1\right)^2}{n^c + 1} n^{1-\beta}\right)  =\exp \left( - \Theta(n^{1-\beta + c})\right). \]
\end{corollary}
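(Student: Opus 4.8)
The plan is to apply the upper-tail Chernoff bound of Lemma~\ref{lemma:chernoff} directly, since a binomial random variable with parameters $(n, n^{-\beta})$ is precisely a sum of $n$ independent indicator random variables with $\mathbb{E}[X] = n \cdot n^{-\beta} = n^{1-\beta}$. The whole statement is then a matter of choosing the deviation parameter $\epsilon$ correctly and reading off the asymptotics.

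First I would calibrate $\epsilon$ so that the Chernoff threshold $(1+\epsilon)\mathbb{E}[X]$ matches the target value $n^{1-\beta+c}$. Solving $(1+\epsilon)\, n^{1-\beta} = n^{1-\beta+c}$ gives $\epsilon = n^c - 1$. Because $c > 0$, we have $\epsilon > 0$ for all $n \geq 2$, so the hypotheses of the upper-tail inequality are met. Substituting $\epsilon^2 = (n^c-1)^2$ and $2 + \epsilon = n^c + 1$ into that inequality yields
\[
\mathbb{P}\left(X \geq n^{1-\beta+c}\right) \leq \exp\left(-\frac{(n^c-1)^2}{n^c+1}\, n^{1-\beta}\right),
\]
which is exactly the claimed bound; no further manipulation is needed for the inequality itself.

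It then remains to justify the equality with $\exp\left(-\Theta(n^{1-\beta+c})\right)$ by controlling the prefactor $\frac{(n^c-1)^2}{n^c+1}$. For the upper direction one has $\frac{(n^c-1)^2}{n^c+1} \leq \frac{(n^c)^2}{n^c} = n^c$, and for the lower direction, once $n^c \geq 2$ one has $(n^c-1)^2 \geq \tfrac{1}{4} n^{2c}$ and $n^c + 1 \leq 2n^c$, so $\frac{(n^c-1)^2}{n^c+1} \geq \tfrac{1}{8} n^c$. Hence the prefactor is $\Theta(n^c)$, and multiplying through by $n^{1-\beta}$ gives an exponent of $\Theta(n^{1-\beta+c})$.

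I do not expect a genuine obstacle here: the corollary is a direct specialization of the Chernoff bound, and the only step requiring even minor care is supplying the matching lower bound hidden in the $\Theta$ notation above, which is the elementary two-sided estimate just described. Everything else is a substitution of $\epsilon = n^c - 1$ into Lemma~\ref{lemma:chernoff}.
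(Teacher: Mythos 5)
Your proposal is correct and is exactly the intended derivation: the paper states the corollary without proof as a direct specialization of Lemma~\ref{lemma:chernoff}, and substituting $\epsilon = n^c - 1$ (so that $2+\epsilon = n^c+1$) with $\mathbb{E}[X] = n^{1-\beta}$ gives the displayed bound verbatim. Your two-sided estimate showing the prefactor is $\Theta(n^c)$ cleanly justifies the final $\exp\left(-\Theta(n^{1-\beta+c})\right)$ identification.
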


\begin{lemma}\label{lemma:binomial-domination}
Let $X$ and $Y$ be random variables such that conditioned on $Y$, $X$ is distributed as a binomial random variable with parameters $(Y, p)$. Let $Z(m)$ be a binomial random variable with parameters $(m, p)$. Then
\[\mathbb{P}\left(X \leq t_1 ~\Big|~ Y \geq t_2 \right) \leq \mathbb{P}\left(Z(t_2) \leq t_1\right) \text{~~and~~} \mathbb{P}\left(X \geq t_1 ~\Big|~ Y \leq t_2 \right) \leq \mathbb{P}\left(Z(t_2) \geq t_1\right).\]
\end{lemma}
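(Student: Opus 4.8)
The plan is to prove both inequalities at once by constructing a single coupling that simultaneously realizes $X$, $Y$, and $Z(t_2)$ on one probability space, exploiting the monotonicity of the binomial distribution in its number of trials. Since $X$ conditioned on $Y$ is binomial with parameters $(Y,p)$, the variable $Y$ necessarily takes nonnegative integer values, so I first sample $Y$ together with an independent sequence $B_1, B_2, \dots$ of i.i.d.\ $\mathrm{Bernoulli}(p)$ random variables, and then set $X := \sum_{i=1}^{Y} B_i$. A sum of $Y$ i.i.d.\ $\mathrm{Bernoulli}(p)$ variables is $\mathrm{Bin}(Y,p)$, so this construction reproduces the prescribed conditional law of $X$ given $Y$. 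On the same space I define $Z(t_2) := \sum_{i=1}^{t_2} B_i$, which has law $\mathrm{Bin}(t_2,p)$ and, crucially, depends only on $B_1,\dots,B_{t_2}$ and is therefore independent of $Y$.

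The first inequality then follows from a pointwise comparison on the conditioning event. On $\{Y \ge t_2\}$ every summand is nonnegative, so $X = \sum_{i=1}^{Y} B_i \ge \sum_{i=1}^{t_2} B_i = Z(t_2)$, whence $\{X \le t_1\} \cap \{Y \ge t_2\} \subseteq \{Z(t_2) \le t_1\} \cap \{Y \ge t_2\}$. Taking probabilities and dividing by $\mathbb{P}(Y \ge t_2)$ gives $\mathbb{P}(X \le t_1 \mid Y \ge t_2) \le \mathbb{P}(Z(t_2) \le t_1 \mid Y \ge t_2)$, and the independence of $Z(t_2)$ from the event $\{Y \ge t_2\}$ collapses the right-hand side to $\mathbb{P}(Z(t_2) \le t_1)$, as claimed.

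The second inequality is entirely symmetric. On $\{Y \le t_2\}$ one has $X = \sum_{i=1}^{Y} B_i \le \sum_{i=1}^{t_2} B_i = Z(t_2)$, so $\{X \ge t_1\} \cap \{Y \le t_2\} \subseteq \{Z(t_2) \ge t_1\} \cap \{Y \le t_2\}$; the same two steps of conditioning followed by invoking the independence of $Z(t_2)$ from $\{Y \le t_2\}$ yield $\mathbb{P}(X \ge t_1 \mid Y \le t_2) \le \mathbb{P}(Z(t_2) \ge t_1)$.

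I do not expect a genuine obstacle here, only bookkeeping to keep straight. The two substantive points are that the coupled $X = \sum_{i=1}^{Y} B_i$ really carries the stated conditional distribution, and that $Z(t_2)$ is genuinely independent of the event constraining $Y$ (which holds because $Z(t_2)$ is measurable with respect to $B_1,\dots,B_{t_2}$ while $\{Y \ge t_2\}$ and $\{Y \le t_2\}$ are measurable with respect to $Y$, and the two families are independent). I will take $t_2$ to be a nonnegative integer so that $Z(t_2)$ is well defined and the partial sum $\sum_{i=1}^{t_2} B_i$ makes sense; conditioning is understood on events of positive probability. Beyond this, the argument is just the standard realization of the stochastic domination of binomials in the number of trials by a monotone coupling.
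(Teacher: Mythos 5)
Your proof is correct and rests on the same fact the paper invokes---the stochastic monotonicity of $\mathrm{Bin}(m,p)$ in the number of trials $m$---which the paper simply asserts as an observation and applies after conditioning on the value of $Y$, whereas you realize it explicitly through the standard shared-Bernoulli-sequence coupling and handle the conditioning in one stroke via the pointwise comparison on $\{Y \ge t_2\}$ (resp.\ $\{Y \le t_2\}$). The only caveat is the integrality of $t_2$, which you already flag and which the paper handles in its applications by taking floors.
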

\begin{proof}
The inequalities follow from the observation that $\mathbb{P}\left(Z(m_1) \geq t \right) \leq\mathbb{P}\left( Z(m_2) \geq t\right)$
for $m_1 < m_2$.
\end{proof}

\section{Reconstruction from $1$-neighborhoods}\label{sec:1-neighbourhoods}
\begin{proof}[Proof of Lemma \ref{lemma:fingerprint}]
To determine whether a pair of vertices $u, v$ is connected by an edge, we examine the neighborhoods of $u$ and $v$, observing graphs $H_{u,u_0}$ and $H_{v,v_0}$ for neighbors $u_0 \sim u$ and $v_0 \sim v$. We declare that $u$ and $v$ are connected if we observe vertices $u_0 \sim u$ and $v_0 \sim v$ in the neighborhoods of $u$ and $v$ respectively such that $H_{u,u_0} \sim H_{v,v_0}$. Clearly this occurs if $(u,v) \in E$. For the other direction, suppose there exist vertices $u_0 \sim u$ and $v_0 \sim v$ such that $H_{u,u_0} \sim H_{v,v_0}$. By the assumption of the lemma, $(u,u_0)$ and $(v,v_0)$ are the same edge, and we conclude that $u$ and $v$ are connected. Continuing this process, we recover the whole graph.
\end{proof}

\begin{proof}[Proof of Theorem \ref{theorem:1-reconstructable}]
Applying Lemma \ref{lemma:fingerprint}, it suffices to show that with high probability, whenever $(u,v), (x,y) \in E$ are distinct edges, then $H_{u,v} \nsim H_{x,y}$.

Let $W_{ab}$ denote the number of shared neighbors of vertices $a$ and $b$. If $H_{u,v} \sim H_{x,y}$, then $W_{uv} = W_{xy}$. Let $Z = \mathbbm{1}\{u \sim x, v \sim x\} +\mathbbm{1}\{u \sim y, v \sim y\}$, and suppose $W_{uv} = W_{xy}  = \lambda + Z$. In other words, $\lambda$ is the number of common neighbors of $u$ and $v$ excluding $x$ and $y$. Let $Y$ be the number of shared neighbors of $u$, $v$, $x$, and $y$. Suppose also that $Y = \mu$. Let $G_1$ be the subgraph induced by the $\lambda - \mu$ shared neighbors of $u$ and $v$ (excluding $x$ and $y$) that are not neighbors of both $x$ and $y$. Let $G_2$ be the subgraph induced by all $\lambda + Z$ shared neighbors of $x$ and $y$. The graphs $G_1$ and $G_2$ are disjoint. Observe that if $H_{u,v} \sim H_{x,y}$, then $G_1$ must be a subgraph of $G_2$, which we write as $G_1 \subset G_2$. 

We upper-bound the probability of the event $\{G_1 \subset G_2\}$ by the First Moment Method. Observe that $G_1$ is an \ER graph, while we may have already revealed some edges of $G_2$. We note that up to $2\mu + 1$ edges have already been revealed in $G_2$; if $u$, $v$, $x$, and $y$ form a clique, then the edges from $u$ and $v$ to the shared neighbors of $u$, $v$, $x$, and $y$ have been revealed, contributing $2\mu$ edges, with the last edge revealed being $(u,v)$ itself. Let $E(H)$ denote the number of edges of a graph $H$. We have
\begin{align}
&\mathbb{P}\left(H_{u,v} \sim  H_{x,y} ~\big|~ W_{uv} = W_{xy} = \lambda + Z, Y = \mu, E(G_1) = k \right)  \nonumber\\
&\leq \mathbb{P}\left( G_1 \subset G_2 ~\big|~ W_{uv} = W_{xy} = \lambda + Z, Y = \mu, E(G_1) = k \right) \nonumber \\
&\leq \binom{\lambda + 2}{\lambda - \mu} (\lambda - \mu)! p_n^{k-2\mu-1} \nonumber \\
&\leq (\lambda + 2)^{\lambda - \mu} p_n^{k-2\mu-1} \nonumber\\
&\leq \exp \left(\lambda \log(n) + (k-2\mu-1) \log(p_n) \right) \nonumber\\
&= \exp \left(\log(n) \left(\lambda - \alpha(k - 2\mu-1) \right)\right)\nonumber\\
&\leq \exp \left(\log(n) \left((2\alpha+1) \lambda + \alpha - \alpha k \right)\right) \label{eq:bound-fingerprint}
\end{align}
We now provide high probability bounds on $W_{uv} - Z \leq W_{uv}$ and $E(G_1)$. Observe that $W_{uv}$ is distributed as a binomial random variable with parameters $(n-2, p_n^2)$. Let $c > 0$, to be determined. By Corollary \ref{corollary:chernoff},
\begin{align*}
\mathbb{P}\left(W_{uv} \geq n^c (n-2)p_n^2\right) &\leq  \exp\left(-\Theta\left(n^{1+c - 2\alpha} \right) \right).
\end{align*}
For this to be a high probability bound, we need $1+c-2\alpha > 0 \iff c > 2\alpha -1$. In that case, we obtain
\begin{align}
\mathbb{P}\left(W_{uv} \geq n^{1+c-2\alpha} \right) &= o \left( \frac{1}{n^4}\right). \label{eq:Y-bound}
\end{align}
Next, observe that $E(G_1)$ is a binomial random variable with parameters $\left( \binom{\lambda - \mu}{2}, p_n\right)$, conditioned on $W_{uv} - Y -Z = \lambda - \mu$. In turn, $W_{uv} - Y -Z$ is a binomial random variable. If the vertices $u$, $v$, $x$, and $y$ are distinct, then $W_{uv} - Y -Z$ is binomial with parameters $\left(n-4, p_n^2(1-p_n^2)\right)$. Otherwise, $W_{uv} - Y -Z$ is binomial random variable with parameters $\left(n-3, p_n^2(1-p_n)\right)$. Fix $0 < \epsilon < \frac{1}{2}$. In the case of distinct vertices, we have
\begin{align*}
\mathbb{P}\left(W_{uv} - Y -Z \leq (1-\epsilon) (n-4)   p_n^2(1-p_n^2) \right) &\leq \exp \left(- \frac{\epsilon^2}{2} (n-4)   p_n^2(1-p_n^2)\right)\\
& = \exp \left(- \Theta\left(n^{1 -2\alpha} \right) \right) = o \left(\frac{1}{n^4}\right).
\end{align*}
A similar conclusion holds if the vertices are not distinct, so that
\begin{align}
\mathbb{P}\left(W_{uv} - Y -Z \leq \frac{1}{2} n p_n^2 \right) &= o \left( \frac{1}{n^4}\right),\label{eq:vertex-bound}
\end{align}
for $n$ sufficiently large. Applying \eqref{eq:vertex-bound} and Lemma \ref{lemma:binomial-domination}, we obtain
\begin{align}
\mathbb{P} \left( E(G_1) \leq (1 - \epsilon ) p_n \binom{\frac{1}{2}n p_n^2 }{2} \right) &\leq  \exp\left(- \frac{\epsilon^2}{2}p_n \binom{\frac{1}{2}np_n^2 }{2}   \right) + o \left( \frac{1}{n^4}\right) \nonumber \\
&= \exp\left(- \Theta\left(n^{2  -5\alpha} \right) \right) + o \left( \frac{1}{n^4}\right). \label{eq:E-bound}
\end{align}
Using \eqref{eq:Y-bound} and \eqref{eq:E-bound} in \eqref{eq:bound-fingerprint}, we arrive at
\begin{align*}
\mathbb{P}\left(H_{u,v} \sim H_{x,y}\right) &\leq \exp \left(\Theta\left(n^{1+c-2\alpha}\log(n) \right) - \Theta\left(n^{2 -5\alpha} \log(n) \right) \right) + o \left(\frac{1}{n^4}\right).
\end{align*}
For this to be a high probability bound, we need $1+c -2\alpha < 2  -5\alpha \iff c  < 1-3\alpha $.

Summarizing, the conditions are $\max\{0, 2\alpha-1\} < c < 1- 3\alpha$. This is a consistent condition, since $\alpha < \frac{1}{3}$. We choose $c = \frac{1}{2} \left(\max\{0, 2\alpha -1\} +1- 3\alpha \right)$. Applying a union bound, we conclude that $H_{u,v} \nsim H_{x,y}$ whenever $(u,v)$ and $(x,y)$ are distinct edges, with high probability.
\end{proof}
\begin{remark}
Intuitively, $H_{u,v}$ looks like $G\left(n p_n^2, p_n\right)$, and therefore we expect that if it is supercritical, i.e. $n p_n^3 \gg 1$, then it will not be isomorphic to an independent copy. Of course, $H_{u,v}$ and $H_{x,y}$ are not independent, which needs to be accounted for in the proof. Similarly, for radius $r$, the expected number of vertices in the analogue of $H_{u,v}$ is of order $n \left(\nicefrac{(np_n)^r}{n} \right)^2 = n^{2r-1} p_n^{2r}$, so reconstruction should be possible when $G\left(n^{2r-1} p_n^{2r}, p_n \right)$ is supercritical, i.e. $p_n \gg n^{\frac{1-2r}{1+2r}}$. On the other hand, \cite{Mossel2019} showed that for $r=3$ it suffices that $\lim_{n \to \infty} \frac{np_n}{\log^2(n)} = \infty$, a weaker condition than $\lim_{n \to \infty} n^{\frac{5}{7}}p_n = \infty$. This suggests that the fingerprint argument may not be optimal.\end{remark}

\begin{proof}[Proof of Theorem \ref{theorem:1-non-reconstructable-part-1}]
Let $\mathcal{N}_r(G)$ denote the collection of $r$-neighborhoods of a graph $G$. Consider a particular reconstruction algorithm. Given a collection of neighborhoods, the algorithm outputs a graph. We will find a set $S$ of neighborhood collections such that with high probability, $\mathcal{N}_1(G) \in S$. Next, suppose we know that $|E| = m$. Given this information, there are $\binom{\binom{n}{2}}{m}$ possible labeled graphs, which are sampled with equal probability. The algorithm maps each element of $S$ to an isomorphism class, which corresponds to at most $n!$ graphs. Therefore, conditioned on $|E| = m$, the algorithm fails with probability at least 
\[\mathbb{P}\left(\mathcal{N}_1(G) \in S ~\big|~ |E| = m \right) - \frac{n!|S|}{\binom{\binom{n}{2}}{m}} = \frac{\binom{\binom{n}{2}}{m} - n!|S|}{\binom{\binom{n}{2}}{m}} - \mathbb{P}(\mathcal{N}_1(G) \not \in S ~\big|~ |E| = m).\]
To see this, observe that the algorithm fails whenever $G$ is such that $\mathcal{N}_1(G) \in S$ but $G$ is not one of the at most $n! |S|$ graphs corresponding to the algorithm's output on $|S|$. Let $S_E \subset \left[ \binom{n}{2} \right]$. The overall failure probability of the algorithm is then at least
\begin{align*}
&\sum_{m \in S_E} \frac{\binom{\binom{n}{2}}{m} - n!|S|}{\binom{\binom{n}{2}}{m}}  \mathbb{P}\left( |E| = m\right) - \mathbb{P}\left(\mathcal{N}_1(G) \not \in S \cap |E| \in S_E\right) \\
&\geq \mathbb{P}\left(|E| \in S_E\right) \min_{m \in S_E} \frac{\binom{\binom{n}{2}}{m} - n!|S|}{\binom{\binom{n}{2}}{m}} - \mathbb{P}\left(\mathcal{N}_1(G) \not \in S \right).
\end{align*}
Therefore, it suffices to show $\mathcal{N}_1(G) \in S$ with high probability, $|E| \in S_E$ with high probability, and $n!|S| = o\left(\binom{\binom{n}{2}}{m}\right)$ for all $m \in S_E$.

We now construct the set $S$. Fix $0 < \epsilon <1$, and let $q_n = (1+\epsilon) p_n$. Let $c> 0$, to be determined, and let $t_n = (1 + n^c) p_n$. Let $S$ be the set of possible $1$-neighborhood collections where the degree of each central vertex is less than $q_n n$ and each neighborhood has fewer than  $\frac{1}{2} q_n^2 t_n n^2$ neighbor edges. We need to show that the collection of $1$-neighborhoods of $G$ appears in $S$ with high probability. By a Chernoff bound,
\begin{align}
\mathbb{P}\left(\deg(v) <  q_n n \right) &\geq \mathbb{P}\left(\deg(v) <  q_n (n-1)\right) \nonumber \\
&= 1-  \mathbb{P}\left(\deg(v) \geq  (1+\epsilon) p_n (n-1)\right) \nonumber \\
&\geq 1 -  \exp \left(-\frac{\epsilon^2 p_n (n-1) }{3} \right). \label{eq:single-degree-bound}
\end{align}
Let $Z(v)$ be the number of edges among the neighbors of $v$. Conditioned on $\deg(v)$, $Z(v)$ is distributed as a binomial random variable with parameters $\left( \binom{\deg(v)}{2}, p_n \right)$. We have
\begin{align}
\mathbb{P}\left( Z(v) \geq \frac{1}{2} q_n^2 t_n n^2 \right)  &\leq  \mathbb{P}\left( Z(v) \geq \frac{1}{2} q_n^2 t_n n^2   ~\Big|~ \deg(v) <  q_n n \right) + \mathbb{P}\left(\deg(v) \geq  q_n n \right). \label{eq:Z-bound}
\end{align}
By Lemma \ref{lemma:binomial-domination} and a Chernoff bound,
\begin{align}
\mathbb{P}\left( Z(v) \geq \frac{1}{2} q_n^2 t_n n^2   ~\Big|~ \deg(v) <  q_n n \right)  &\leq \mathbb{P}\left( Z(v) \geq \frac{1}{2} q_n^2 t_n n^2   ~\Big|~ \deg(v) =  \lfloor q_n n \rfloor \right) \nonumber \\
&\leq \mathbb{P}\left( Z(v) \geq \binom{\lfloor q_n n \rfloor}{2} t_n   ~\Big|~ \deg(v) =  \lfloor q_n n \rfloor \right) \nonumber \\
&\leq \exp\left(- \frac{n^{2c}}{2 + n^c}\binom{\lfloor q_n n \rfloor}{2} p_n  \right). \label{eq:Z-bound-conditional}
\end{align}
Substituting \eqref{eq:single-degree-bound} and \eqref{eq:Z-bound-conditional} into \eqref{eq:Z-bound}, we arrive at
\begin{align*}
\mathbb{P}\left( Z(v) \geq \frac{1}{2} q_n^2 t_n n^2 \right) &\leq  \exp\left(- \frac{n^{2c}}{2 + n^c}\binom{\lfloor q_n n \rfloor}{2} p_n  \right)  +  \exp \left(-\frac{\epsilon^2 p_n (n-1) }{3} \right).
\end{align*}
By a union bound, 
\begin{align*}
&\mathbb{P}\left( \bigcap_{v \in V} \left \{\deg(v) <  q_n n \cap Z(v) < \frac{1}{2}q_n^2 t_n n^2 \right \}\right) \\
&\geq 1  - n \exp\left(- \frac{n^{2c}}{2 + n^c}\binom{\lfloor q_n n \rfloor}{2} p_n  \right) - n \exp \left(-\frac{\epsilon^2 p_n (n-1) }{3} \right)\\
&= 1  - n \exp\left(-\Theta (n^{2 + c-3\alpha})\right) - n \exp\left(-\Theta(n^{1-\alpha})\right).
\end{align*}
We therefore need $\alpha < 1$ and $2 + c - 3\alpha > 0 \iff c > 3\alpha -2$ for this to be a high probability bound. If these constraints are satisfied, then $\mathcal{N}_1(G) \in S$ with high probability.

We now bound $|S|$. For clarity of presentation, we omit floor functions; the resulting off-by-one errors will not affect asymptotics. In each neighborhood, there are $q_n n$ choices for the degree of the central vertex, and $\frac{1}{2} q_n^2 t_n n^2$ choices for the number of neighbor edges. If $t_n = o(1) \iff c < \alpha$, then the number of choices for the set of edges is upper-bounded by  $\binom{\binom{q_n n}{2}}{\frac{1}{2} q_n^2 t_n n^2}$. Therefore,
\begin{align}
|S| &\leq \left(q_n n \cdot \frac{1}{2} q_n^2 t_n n^2 \binom{\binom{q_n n}{2}}{\frac{1}{2} q_n^2 t_n n^2}\right)^n \leq \left(\frac{1}{2} q_n^3 t_n n^3 \left(\frac{e(q_n n)^2}{q_n^2 t_n n^2}\right)^{\frac{1}{2} q_n^2 t_n n^2} \right)^n \nonumber \\
&= \left(\frac{1}{2} q_n^3 t_n n^3 \left(\frac{e}{t_n}\right)^{\frac{1}{2} q_n^2 t_n n^2} \right)^n \label{eq:S-size}.
\end{align}

We now choose $S_E$ to be $\{m \in \mathbb{N} : \left|m - \binom{n}{2} p_n \right| < \epsilon \binom{n}{2}\}$. By a Chernoff bound,
\begin{align*}
\mathbb{P}\left(|E| \not \in S_E \right)) = \mathbb{P}\left( \left| |E| - \binom{n}{2} p_n \right| \geq \epsilon \binom{n}{2} \right) &\leq 2 \exp \left(-\frac{\epsilon^2}{3} \binom{n}{2} p_n\right) = o(1).
\end{align*}
Therefore, $|E| \in S_E$ with high probability.

We now compute
\[\min_{m \in S_E} \frac{\binom{\binom{n}{2}}{m} - n!|S|}{\binom{\binom{n}{2}}{m}} = 1 - \max_{m \in S_E} \frac{n!|S|}{\binom{\binom{n}{2}}{m}} .\]
We have
\begin{align}
\min_{m \in S_E} \binom{\binom{n}{2}}{m} &= \binom{\binom{n}{2}}{(1-\epsilon) \binom{n}{2} p_n} \geq \left( \frac{1}{(1-\epsilon)p_n}\right)^{(1-\epsilon) \binom{n}{2} p_n}. \label{eq:size}
\end{align}

Using \eqref{eq:S-size}, \eqref{eq:size}, and the bound $n! \leq  \exp(n \log(n))$,
we have 
\begin{align*}
&\max_{m \in S_E} \frac{n!|S|}{\binom{\binom{n}{2}}{m}}\leq \frac{ \exp(n \log(n)) \left(\frac{1}{2} q_n^3 t_n n^3 \left(\frac{e}{t_n}\right)^{\frac{1}{2} q_n^2 t_n n^2} \right)^n}{ \left( \frac{1}{(1-\epsilon)p_n}\right)^{(1-\epsilon) \binom{n}{2} p_n} }\\
&= \exp \left\{ n \log(n) + n \left[\log\left( \frac{1}{2} q_n^3 t_n n^3 \right) + \frac{1}{2} q_n^2 t_n n^2 \log \left(\frac{e}{t_n}\right) \right] - (1-\epsilon) \binom{n}{2} p_n \log \left( \frac{1}{(1-\epsilon)p_n}\right) \right\}\\
&= \exp \left\{ n \log(n)  +  \Theta(n \log\left(n^{3+c-4\alpha} \right)) + \Theta (n^{3 + c - 3\alpha} \log(n)) - \Theta(n^{2-\alpha} \log(n)) \right\}\\
&= \exp \left\{ \Theta(n \log\left(n \right)) + \Theta (n^{3 + c - 3\alpha} \log(n)) - \Theta(n^{2-\alpha} \log(n)) \right\}.
\end{align*}
We need $3 + c - 3\alpha < 2-\alpha \iff c < 2\alpha -1$ for this bound to go to zero.

Summarizing, we require 
\[ \max\{0, 3\alpha-2\} < c < \min\{\alpha, 2\alpha -1\}.\] 
This is consistent for $\frac{1}{2} < \alpha < 1$. Then $\min\{\alpha, 2\alpha -1\} = 2\alpha - 1$. The proof is completed by choosing
\[c = \frac{1}{2} \left(\max\{0, 3\alpha-2\}  + 2\alpha -1\right). \qedhere\] 
\end{proof}

\section{Reconstruction from $2$-neighborhoods}\label{sec:2-neighbourhoods}
The following lemmas will be used to prove Theorem \ref{theorem:2-reconstructable} for the case $0 < \alpha < \frac{1}{2}$.
\begin{lemma}[\cite{Janson2000}]\label{lemma:diameter-2}
Let $p_n = c \frac{\sqrt{\log n}}{\sqrt{n}}$. If $c > \sqrt{2}$, then the diameter is $2$ with high probability. If $c < \sqrt{2}$, then the diameter is at least $3$ with high probability.
\end{lemma}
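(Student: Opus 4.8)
This is a classical diameter-threshold statement, so rather than reproduce the argument of \cite{Janson2000} I sketch the natural second-moment proof. The plan is to reduce the claim to the first and second moments of a single counting variable. Observe that $G$ has diameter at most $2$ precisely when every pair of non-adjacent vertices has a common neighbor (and diameter exactly $2$, rather than $1$, holds automatically, since $p_n < 1$ forces a non-edge with high probability). Call an unordered pair $\{u,v\}$ \emph{bad} if $u \nsim v$ and $u, v$ have no common neighbor, and let $X$ denote the number of bad pairs. Then the diameter is $2$ iff $X = 0$, so it suffices to analyze $X$ in the two regimes.

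First I would compute $\mathbb{E}[X]$. For a fixed pair, the event $u \nsim v$ and the events ``each of the $n-2$ remaining vertices $w$ fails to be a common neighbor'' (each of probability $1 - p_n^2$) are independent, so $\mathbb{P}(\{u,v\} \text{ bad}) = (1 - p_n)(1 - p_n^2)^{n-2}$. Since $p_n^2 = c^2 \log n / n$, we get $(1 - p_n^2)^{n-2} = n^{-c^2(1 + o(1))}$, whence $\mathbb{E}[X] = \binom{n}{2}(1 - p_n)(1 - p_n^2)^{n-2} = n^{2 - c^2 + o(1)}$. When $c > \sqrt{2}$ this tends to $0$, so Markov's inequality gives $X = 0$ with high probability, i.e. diameter $2$.

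When $c < \sqrt{2}$ we have $\mathbb{E}[X] \to \infty$, and I would invoke the second moment method: if $\mathrm{Var}(X) = o\big((\mathbb{E}[X])^2\big)$, then Chebyshev's inequality forces $X > 0$ with high probability, hence diameter at least $3$. The work lies in expanding $\mathbb{E}[X^2] = \sum \mathbb{P}(\{u,v\},\{x,y\} \text{ both bad})$ and grouping the pairs of pairs by the size of their intersection. Disjoint pairs are nearly independent: the only coupling is that a single vertex $w$ may be a candidate common neighbor of both pairs, and since the relevant edges are independent the joint ``no common neighbor'' probability is $(1 - p_n^2)^{2(n - O(1))}$, so the disjoint contribution matches $(\mathbb{E}[X])^2$ up to a factor $(1 - p_n^2)^{-O(1)} = 1 + o(1)$. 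Pairs sharing exactly one vertex number $O(n^3)$, each with joint probability $n^{-2c^2 + o(1)}$, contributing $n^{3 - 2c^2 + o(1)} = o(n^{4 - 2c^2}) = o\big((\mathbb{E}[X])^2\big)$; the diagonal of identical pairs contributes $\mathbb{E}[X] = o\big((\mathbb{E}[X])^2\big)$. Summing, $\mathrm{Var}(X) = o\big((\mathbb{E}[X])^2\big)$, which closes this case.

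The main obstacle is the covariance bookkeeping in the second-moment step: one must verify carefully that the ``no common neighbor'' events for two overlapping pairs decouple up to lower-order factors, tracking both the shared candidate common-neighbor vertices $w$ and the edges inside the union $\{u,v,x,y\}$, which enter only as $O(1)$ correction factors of the form $(1 - p_n^2)^{\pm O(1)} = 1 + o(1)$. Once this decoupling is confirmed, the exponent comparisons above are routine, and the two regimes combine to give the stated dichotomy at $c = \sqrt{2}$.
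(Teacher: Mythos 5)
The paper does not prove this lemma; it is quoted directly from \cite{Janson2000}, so there is no in-paper argument to compare against. Your sketch is the standard first-/second-moment proof of the diameter-$2$ threshold and it is correct: the decoupling you flag as the main obstacle does go through, since for two bad pairs sharing a vertex the per-candidate factor is $1-2p_n^2+p_n^3$ versus $(1-p_n^2)^2=1-2p_n^2+p_n^4$, so the accumulated discrepancy is $\left(1+O(p_n^3)\right)^n=\exp\left(O(np_n^3)\right)=1+o(1)$ because $np_n^3 = c^3(\log n)^{3/2}n^{-1/2}\to 0$, while for disjoint pairs only the $O(1)$ edges inside the union of the four vertices couple the two events. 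This is essentially the classical argument that the cited reference gives, so nothing further is needed beyond filling in the routine variance bookkeeping you already outline.
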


A canonical labeling of a graph involves assigning a unique label to each vertex, in a way that is invariant under isomorphism. 
\begin{lemma}[\cite{Czajka2008}]\label{lemma:canonical-labelling}
Suppose $p_n \leq \frac{1}{2}$ and $p_n = \omega \left(\frac{\log^4(n)}{n \log( \log (n))}\right)$, and let $G(V, E) \sim G(n, p_n)$. Then with high probability, any $u, v \in V$ have different degree neighborhoods. Furthermore, one can produce a canonical labeling of $G$ by sorting the vertices in lexicographic order by their degree neighborhoods.
\end{lemma}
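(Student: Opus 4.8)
The plan is to establish the two assertions separately: first that a.a.s. all vertices have distinct degree neighborhoods, and then that distinctness of an isomorphism-invariant vertex statistic automatically yields a canonical labeling. For the first assertion I would fix a pair $u \ne v$, bound the probability that they share a degree neighborhood, and union bound over the $\binom{n}{2}$ pairs. The key reduction is to reveal first only the edges incident to $u$ or $v$. This exposes $A := N(u)\setminus N(v)$, $B := N(v)\setminus N(u)$, and $C := N(u)\cap N(v)$. A collision forces $\deg(u)=\deg(v)$, hence $|A|=|B|=:k$, and since the contributions of the common neighbors $C$ to the two neighbor-degree multisets are identical, a collision occurs if and only if the multiset $\{\deg(w): w\in A\}$ equals $\{\deg(w): w\in B\}$. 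By a Chernoff bound (Lemma \ref{lemma:chernoff}) I would show $k \ge \tfrac12 np_n$ except with probability $\exp(-\Theta(np_n))$, and henceforth work on that event. Revealing the remaining edges, write $\deg(w)=s_w+X_w$ for $w\in A\cup B$, where $s_w$ collects the already-determined edges (to $u,v,C$, and within $A\cup B$) and $X_w$ counts edges from $w$ into the untouched ``rest'' set $R$, with $|R|=\Theta(n)$ a.a.s. Conditioned on all the $s_w$, the variables $\{X_w\}$ are independent binomials (controlled by Lemma \ref{lemma:binomial-domination}), so the $A$- and $B$-multisets become independent; hence the collision probability is at most the maximum atom $\max_{\mathbf m}\mathbb{P}(B\text{-multiset}=\mathbf m)$, equivalently the $\ell_2$-collision probability of the degree-count vector $\mathbf N=(N_i)_{i}$, where $N_i:=\#\{w\in B:\deg(w)=i\}$.

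The main obstacle is bounding this collision probability. The naive route---treating the multiset as an ordered tuple and multiplying per-vertex anticoncentration factors---fails, because summing over the $k!$ possible matchings between the two multisets blows the bound up; the unordered (multiset) structure is exactly what makes the estimate delicate. Instead I would pass to the count vector and bound $\sum_{\mathbf c}\mathbb{P}(\mathbf N=\mathbf c)^2$ directly. All degrees lie a.a.s. in a window $I$ of width $W=\Theta(\sqrt{np_n\log n})$ about $np_n$ (Corollary \ref{corollary:chernoff} plus a union bound over the $n$ vertices), and on this window $\mathbf N$ behaves like an approximately Poissonized generalized multinomial with $\Theta(\sqrt{np_n})$ coordinates of typical size $\Theta(\sqrt{np_n})$. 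A local central limit / anticoncentration estimate gives a per-coordinate collision factor of order $(np_n)^{-1/2}$ near the mode, and multiplying across the effective width yields a bound of the form $\exp(-\Theta(\sqrt{np_n}\,\log(np_n)))$. Making this rigorous---handling the vertex-dependent shifts $s_w$, the Poissonization error, and the tails of $I$---is the technical heart, and it is where the hypothesis $p_n=\omega\!\left(\log^4 n/(n\log\log n)\right)$ enters, ensuring the resulting bound comfortably beats the $n^2$ union bound. In this paper's regime $p_n=n^{-\alpha}$ with $\alpha<\tfrac12$, so $np_n=n^{1-\alpha}$ is polynomially large and the estimate has enormous room to spare.

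For the second assertion, assuming all degree neighborhoods are distinct, I would attach to each vertex $v$ the string consisting of $\deg(v)$ followed by the non-increasing list of its neighbors' degrees. This string is an isomorphism invariant of the pointed graph $(G,v)$, and by the first part all $n$ strings are distinct a.a.s., so sorting the vertices in increasing lexicographic order of their strings yields a total order and hence a labeling by rank $1,\dots,n$. Any isomorphism $\phi:G\to H$ maps each $v$ to a vertex with the identical string, so it preserves the induced ranks; relabeling by rank therefore makes isomorphic graphs identical as labeled graphs, which is precisely the canonical-labeling property, and the entire procedure runs in polynomial time.
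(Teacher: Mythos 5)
The paper does not actually prove this lemma: it is imported verbatim from Czajka--Pandurangan \cite{Czajka2008}, and the only in-paper commentary is the remark that the proof of Theorem 4.5 of \cite{Mossel2019} yields the same conclusion under the weaker hypothesis $p_n = \omega(\log^2(n)/n)$. So there is no in-paper argument to compare yours against, and your proposal has to be judged on its own terms.

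Your outer architecture is sound and is essentially the standard reduction: union bound over pairs, cancellation of the common neighbors $C$ so that a collision reduces to equality of the degree multisets of $A = N(u)\setminus N(v)$ and $B = N(v)\setminus N(u)$, a two-stage edge revelation that makes the two multisets conditionally independent given the shifts $s_w$, and a bound on the collision probability by the largest atom of one of them. The second half (distinct isomorphism-invariant strings yield a canonical labeling by lexicographic rank) is complete and correct. The genuine gap is exactly where you locate it: the anticoncentration estimate $\max_{\mathbf{m}} \mathbb{P}(B\text{-multiset} = \mathbf{m}) = \exp(-\omega(\log n))$. Everything you say about that step is heuristic --- ``behaves like an approximately Poissonized generalized multinomial,'' ``a local central limit / anticoncentration estimate gives a per-coordinate factor'' --- and none of it is established. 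This is not a routine detail: the coordinates $N_i$ of the count vector are constrained to sum to $|B|$ and are only negatively associated, not independent, so the product-of-per-coordinate-atoms bound does not follow from one-dimensional anticoncentration; the vertex-dependent shifts $s_w$ make the underlying draws non-identically distributed, so even the reduction to a single $\ell_2$-collision probability needs care; and a rigorous version requires either a multidimensional local limit theorem or an explicit combinatorial substitute (Czajka--Pandurangan's actual argument partitions the degree range into buckets and compares bucket counts precisely to sidestep this). Since this single estimate is the entire analytic content of the lemma --- and is the only place the hypothesis $p_n = \omega\left(\frac{\log^4(n)}{n\log\log n}\right)$ can actually be used --- the proposal as written is a credible plan rather than a proof.
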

\begin{remark}
The proof of Theorem \ref{theorem:3-neighbourhoods-original} (Theorem 4.5 in \cite{Mossel2019}) shows that the statement of Lemma \ref{lemma:canonical-labelling} is true for $p_n = \omega\left( \frac{\log^2(n)}{n}\right)$, which strengthens the result of \cite{Czajka2008}.
\end{remark}

\begin{proof}[Proof of Theorem \ref{theorem:2-reconstructable} for $0 < \alpha < \frac{1}{2}$]
By Lemma \ref{lemma:diameter-2}, the diameter is equal to $2$ with high probability, so we see the entire graph from any $2$-neighborhood. To label the graph, we produce the canonical labeling of any $2$-neighborhood, which is possible (and efficient) with high probability by Lemma \ref{lemma:canonical-labelling}. We then sort the vertices lexicographically by their degree neighborhoods in order to determine the canonical label of each vertex. This procedure exactly reconstructs $G$. 
\end{proof}

\begin{proof}[Proof of Theorem \ref{theorem:2-reconstructable} for $\frac{1}{2} < \alpha < \frac{3}{5}$]
The proof is similar to the proof of Theorem \ref{theorem:1-reconstructable}. This time, for $u,v \in V$ where $(u,v) \in E$, we define $L_{u,v}$ as the subgraph induced by those vertices which are at distance exactly $2$ from both $u$ and $v$. Lemma \ref{lemma:fingerprint} applies to the $L_{u,v}$ graphs as well. It therefore suffices to show that with high probability, whenever $(u,v)$ and $(x,y)$ are distinct edges, then $L_{u,v} \nsim L_{x,y}$.

Consider two distinct edges $(u,v)$ and $(x,y)$. Our goal is to show that $\mathbb{P}\left(L_{u,v} \sim L_{x,y} \right) = o \left(n^{-4} \right)$. Similarly to the proof of Theorem \ref{theorem:1-reconstructable}, we will identify two graphs $G_1$ and $G_2$ such that $\mathbb{P}\left(L_{u,v} \sim L_{x,y} \right) \leq \mathbb{P}\left(G_1 \subset G_2\right)$. Note that by revealing which vertices are at distance exactly $2$ from $u$ and $v$, we have not revealed any of the edges among those distance-$2$ neighbors.

Let $W_{ab}$ be the number of shared distance-$2$ neighbors of $a$ and $b$. If $L_{u,v} \sim L_{x,y}$, then $W_{uv} = W_{xy}$. 
Let  $d(a,b)$ be the graph distance between vertices $a$ and $b$, and let 
\[Z = \mathbb{1}\left\{d(u,x) = d(v,x) = 2\right\} + \mathbb{1}\left\{d(u,y) = d(v,y) = 2\right\}.\]
Suppose $W_{uv} = W_{xy} = \lambda + Z$, so that $\lambda$ counts all shared distance-$2$ neighbors of $u$ and $v$ except for $x$ and $y$. Let $Y = \left | \mathcal{N}_2(u) \cap \mathcal{N}_2(v) \cap \mathcal{N}_2(x) \cap \mathcal{N}_2(y)\right|$ be the number of shared distance-$2$ neighbors of $u$, $v$, $x$, and $y$. Suppose also that $Y = \mu$. Let $G_1$ be the subgraph induced by the $\lambda - \mu$ shared distance-$2$ neighbors of $u$ and $v$ that are not $x$, $y$, or a shared distance-$2$ neighbor of $x$ and $y$. Let $G_2$ be the subgraph induced by all distance-$2$ neighbors of $x$ and $y$. Finally, suppose $|\mathcal{N}_1(u)| + |\mathcal{N}_1(v)|  = d$. Similarly to the proof of Theorem \ref{theorem:1-reconstructable}, we claim
\begin{align}
&\mathbb{P}\left(L_{u,v} \sim L_{x,y} ~\big|~ W_{uv} = W_{xy} = \lambda + Z,  Y = \mu, E(G_1) = k , |\mathcal{N}_1(u)| + |\mathcal{N}_1(v)| = d\right)\nonumber \\
&\leq \binom{\lambda + 2}{\lambda - \mu} (\lambda - \mu)! p_n^{k-d} \leq \exp\left(\log(n) \left(\lambda - \alpha (k-d) \right) \right). \label{eq:bound-reconstructable-fingerprint}
\end{align}
To see the claim, consider the following way of determining which vertices belong in $G_1$ and $G_2$. First, determine which vertices are at distance-$1$ and distance-$2$ from $x$, and do the same for $y$. From this information, we determine which vertices are in $G_2$. We have not revealed any of the edges in $G_2$, unless $u$ and $v$ are both shared neighbors of $x$ and $y$. In that case, we have revealed the edge $(u,v)$. Next, we reveal the distance-$1$ neighbors of $u$ and $v$. This step reveals at most $|\mathcal{N}_1(u)| + |\mathcal{N}_1(v)| $ edges in $G_2$. Finally, to determine which vertices are in $G_1$, we consider all vertices that are not in $G_2$. From among those vertices, we place those that are shared distance-$2$ neighbors of $u$ and $v$ into $G_1$. This last step does not reveal any edges in $G_2$. Therefore, $G_2$ has at most $|\mathcal{N}_1(u)| + |\mathcal{N}_1(v)|$ edges revealed in this process, so the probability that any particular collection of $k$ edges exists is at most $p_n^{k-(|\mathcal{N}_1(u)| + |\mathcal{N}_1(v)| )}$.

We will now find high probability bounds on $\lambda$, $E(G_1)$, and $|\mathcal{N}_1(u)| + |\mathcal{N}_1(v)|$. Since $W_{uv} - 2 \leq \lambda \leq W_{uv}$, we will bound $W_{uv}$. Note that conditioned on $|\mathcal{N}_1(u)|$, $|\mathcal{N}_1(v)|$, and $|\mathcal{N}_1(u) \cap \mathcal{N}_1(v)|$, the number of shared distance-$2$ neighbors is distributed as a binomial random variable with parameters $\left(n - |\mathcal{N}_1(u) \cup \mathcal{N}_1(v)|, q_n \right),$ where
\begin{align*}
q_n &= 1 - (1-p_n)^{|\mathcal{N}_1(u) \cap \mathcal{N}_1(v)|}  \\
&~~~+ (1-p_n)^{|\mathcal{N}_1(u) \cap \mathcal{N}_1(v)|}\left(1 - (1-p_n)^{|\mathcal{N}_1(u) \setminus \mathcal{N}_1(v)| -1} \right) \left(1 - (1-p_n)^{|\mathcal{N}_1(v) \setminus \mathcal{N}_1(u)|-1} \right).
\end{align*}
To see this, observe that a candidate shared distance-$2$ neighbor must either be connected to some vertex in $\mathcal{N}_1(u) \cap \mathcal{N}_1(v)$, or failing this, must be connected to some vertex in $\mathcal{N}_1(u) \setminus \{\mathcal{N}_1(v)\cup \{v\}\}$ and some vertex in $\mathcal{N}_1(v) \setminus \{\mathcal{N}_1(u) \cup \{u\}\} $. To derive bounds on $W_{uv}$, we use bounds on $q_n$:
\begin{align*}
q_n &\leq 1 - (1-p_n)^{|\mathcal{N}_1(u) \cap \mathcal{N}_1(v)|}  + \left(1 - (1-p_n)^{|\mathcal{N}_1(u)|} \right) \left(1 - (1-p_n)^{|\mathcal{N}_1(v)|} \right)\\
q_n &\geq \left(1 - (1-p_n)^{|\mathcal{N}_1(u) \setminus \mathcal{N}_1(v)|-1} \right) \left(1 - (1-p_n)^{|\mathcal{N}_1(v) \setminus \mathcal{N}_1(u)|-1} \right).
\end{align*}
Fix $0 < \epsilon < 1$. We have that $|\mathcal{N}_1(u)|, |\mathcal{N}_1(v)| \leq (1+\epsilon) n^{1-\alpha}$ with probability $1-o(n^{-4})$. Noting that $\mathbb{E}\left[\left|\mathcal{N}_1(u) \cap \mathcal{N}_1(v) \right| \right] = (n-2)p_n^2 = \Theta\left(n^{1-2\alpha} \right)$ and applying Corollary \ref{corollary:chernoff}, we have  $|\mathcal{N}_1(u) \cap \mathcal{N}_1(v)| \leq n^{1-2\alpha + c}$ with probability $1-o(n^{-4})$ for $c$ satisfying $1-2\alpha + c > 0 \iff c > 2\alpha - 1$. Therefore, using the upper bound on $q_n$, we have that with probability $1-o(n^{-4})$,
\begin{align*}
W_{uv} &\leq (1+\epsilon) n  \left[1 - (1-p_n) ^{n^{1-2\alpha + c}}+ \left(1 - (1-p_n)^{(1+\epsilon)n^{1-\alpha}}\right)^2 \right] \\
&\leq (1+\epsilon)n  \left[1 - \exp\left(-2n^{1-3\alpha + c} \right) + \left(1 - \exp\left(-2(1+\epsilon)n^{1-2\alpha} \right)\right)^2\right] \\
&\leq (1+\epsilon)n  \left[2n^{1-3\alpha + c} + \left(2(1+\epsilon)n^{1-2\alpha} \right)^2\right]\\
&=\Theta\left(n^{2 -3\alpha + c} \right) + \Theta\left(n^{3-4\alpha} \right),
\end{align*}
where we have used the inequalities $1-x \geq e^{-2x}$ for $x \leq \frac{1}{2}$, and $1-e^{-x} \leq x$. Taking $c > 2\alpha - 1$ sufficiently small, the second term dominates and we arrive at $W_{uv} = O\left(n^{3-4\alpha}  \right)$ with probability $1-o(n^{-4})$.

Next, observe that $E(G_1)$ is a binomial random variable with parameters $\left( \binom{\lambda - \mu}{2}, p_n\right)$, conditioned on $W_{uv} - Y -Z = \lambda - \mu$. We similarly lower bound $W_{uv}$, using the lower bound on $q_n$. Observe that $|\mathcal{N}_1(u) \setminus N_1(v)|, |\mathcal{N}_1(v) \setminus N_1(u)| \geq (1-\epsilon) n^{1-\alpha}$ with probability $1-o(n^{-4})$. Therefore, with probability $1-o(n^{-4})$,
\begin{align*}
W_{uv} &\geq (1-\epsilon) n  \left(1 - (1-p_n)^{(1-\epsilon)n^{1-\alpha}-1}\right)^2 = \Omega\left(n^{3-4\alpha}  \right).
\end{align*}
By similar reasoning, we have $\mu = O\left(n^{5 - 8\alpha} \right)$ with probability $1-o(n^{-4})$ if the four vertices are distinct, and otherwise $\mu = O\left(n^{4 - 6\alpha} \right)$ with probability $1-o(n^{-4})$. Since $5 - 8 \alpha , 4 - 6\alpha< 3 - 4 \alpha$, we have $\binom{\lambda - \mu}{2} = \Theta\left(n^{6-8\alpha} \right)$, so that $E(G_1) = \Theta\left(n^{6 - 9 \alpha} \right)$, with probability $1-o(n^{-4})$.

Finally, examining the bound \eqref{eq:bound-reconstructable-fingerprint}, we first compare $E(G_1)$ to $|\mathcal{N}_1(u)| + |\mathcal{N}_1(v)| $. We see that the order of $E(G_1)$ dominates when
\[6 - 9 \alpha > 1 -\alpha \iff \alpha < \frac{5}{8}. \]
Since $\frac{5}{8} > \frac{3}{5}$, the edge count indeed dominates. Next, we compare $\lambda$ to $E(G_1)$. We see that the order of $E(G_1)$ dominates when 
\[6 - 9 \alpha > 3 - 4 \alpha \iff \alpha < \frac{3}{5}.\]
We have shown that for a given pair of distinct edges $(u,v)$ and $(x,y)$, we have $\mathbb{P}(L_{u,v} \sim L_{x,y}) = o(n^{-4})$. Taking a union bound completes the proof.
\end{proof}

\begin{proof}[Proof of Theorem \ref{theorem:2-non-reconstructable}]
The proof is similar to the proof of Theorem \ref{theorem:1-non-reconstructable-part-1}. Again we set $S_E = \{m \in \mathbb{N} : \left|m - \binom{n}{2} p_n \right| < \epsilon \binom{n}{2}\}$. It remains to find a set $S$ of collections of $2$-neighborhoods such that $\mathcal{N}_2(G) \in S$ with high probability, and show that
\[\max_{m \in S} \frac{n!|S|}{\binom{\binom{n}{2}}{m} } = o(1).\]

We now construct the set $S$. Fix $0 < \epsilon < 1$ to be determined, and let $q_n = (1+\epsilon) p_n$. For some $c > 0$ to be determined, let $t_n = (1 + n^c) p_n$.  Let $X(v)$ and $Y(v)$ respectively be the number of vertices and edges in the graph $\mathcal{N}_2(v)$. Let $S$ be the set of possible $2$-neighborhood collections where for each $v \in V$, the following three conditions hold: (1) $\deg(v) < q_n n$; (2) $\deg(w) < q_n n$ for each $w \sim v$; (3) $Y(v) < \frac{1}{2} q_n^4 t_n n^4 + X(v) -1$.
We need to show that the collection of $2$-neighborhoods of $G$ appears in $S$ with high probability. As was shown in the proof of Theorem \ref{theorem:1-non-reconstructable-part-1}
\begin{align*}
\mathbb{P}\left(\cap_{v \in V} \left\{ \deg(v) <  q_n n  \right\}\right) &= 1 -o(1).\end{align*}
Therefore, the first two properties hold for every neighborhood with high probability. Let $E_v$ be the event that properties (1) and (2) hold in the $2$-neighborhood around $v$. 

Next, we bound the number of edges. Our goal is to lower-bound
\[\mathbb{P} \left(Y(v) \geq \frac{1}{2} q_n^4 t_n n^4 + X(v) - 1~\Big|~ E_v \right).\] 
We reveal the neighborhood $\mathcal{N}_2(v)$ in a particular way. We start from $v$, and reveal its neighbors (and therefore the edges $\{(v, w) : w \sim v\}$). Then we choose an arbitrary order for the neighbors. Starting from the first neighbor $w_1$, we reveal all of its additional neighbors apart from $v$. Next, we come to the second neighbor $w_2$, and reveal all of its neighbors apart from $v$ and $w_1$. Continuing through the neighbors of $v$, we reveal all the vertices of the neighborhood using $X(v) -1$ edges (since each vertex except $v$ is revealed by exactly one other vertex). The number of additional edges in $\mathcal{N}_2(v)$ is dominated by a binomial random variable with parameters $\left( \binom{\left \lfloor (q_nn)^2\right \rfloor}{2}, p_n \right)$. Let $Z \sim \text{Bin} \left( \binom{\left \lfloor (q_nn)^2\right \rfloor}{2}, p_n \right)$. We therefore have
\begin{align*}
\mathbb{P}\left( Y(v) \geq  \frac{1}{2} q_n^4 t_n n^4 + X(v) -1 ~\big|~ E_v \right) &\leq \mathbb{P} \left(Z \geq  \frac{1}{2} q_n^4 t_n n^4\right)\\
&\leq \mathbb{P} \left(Z \geq  \binom{\left \lfloor (q_n n)^2 \right \rfloor}{2} t_n \right)\\
&\leq \exp \left(- \frac{n^{2c}}{2 + n^c} \binom{\left \lfloor (q_n n)^2 \right \rfloor}{2} p_n  \right)\\
&= \exp \left(- \Theta\left(n^{4 + c-5\alpha } \right) \right).
\end{align*}
For this to be a high probability bound, we need $4+ c -5\alpha > 0 \iff c > 5\alpha -4$. We then have
\begin{align*}
\mathbb{P}\left(\bigcap_{v \in V} \left\{ \deg(v) <  q_n n, Y(v) < \frac{1}{2} q_n^4 t_n n^4 + X(v) -1  \right\}\right) &= 1 -o(1),
\end{align*}
We conclude that $\mathcal{N}_2(G) \in S$ with high probability.

We now bound $|S|$. Consider one collection of $2$-neighborhoods in the set $S$, and a particular vertex $v \in V$. There are $q_n n$ choices for the number of neighbors of $v$. Next, there are fewer than $q_n n (q_n n -1) < (q_n n)^2$ vertices which are at distance $2$ from $v$. We reveal the distance-$2$ neighbors sequentially, as described above. Each distance-$2$ neighbor is assigned to one of the distance-$1$ neighbors, according to which distance-$1$ neighbor revealed it.
Using a stars-and-bars argument, the number of ways this assignment can happen is upper-bounded by
\[\binom{q_n n (q_n n -1) + q_n n -1}{q_n n -1} = \binom{(q_n n)^2 -1}{q_n n -1} <  \binom{(q_n n)^2}{q_n n}.\]
So far, $X(v) -1$ edges have been revealed. There are fewer than $q_n^4 t_n n^4$ remaining edges, and at most $\binom{\binom{(q_n n)^2 + 1}{2}}{q_n^4 t_n n^4}$ choices for their locations, as long as $q_n^4 t_n n^4 < \frac{1}{2}\binom{(q_n n)^2 + 1}{2}$, which is satisfied for $c < \alpha$. We therefore have
\begin{align*}
&|S| \leq \left[q_n n \cdot (q_n n)^2 \cdot \binom{(q_n n)^2}{q_n n}  \cdot q_n^4 t_n n^4 \cdot \binom{\binom{(q_n n)^2 + 1}{2}}{q_n^4 t_n n^4} \right]^n\\
&\leq \left[q_n^7 t_n n^7  \left(e q_n n \right)^{q_n n}  \left(\frac{e\binom{(q_n n)^2 + 1}{2}}{q_n^4 t_n n^4} \right)^{q_n^4 t_n n^4} \right]^n\\
&= \exp \left\{n \left(\log \left(q_n^7 t_n n^7 \right) + q_n n \log\left(e q_n n \right) + q_n^4 t_n n^4 \log \left(\frac{e\binom{(q_n n)^2 + 1}{2}}{q_n^4 t_n n^4} \right) \right) \right\}\\
&= \exp \left\{n \log \left(q_n^7 t_n n^7 \right) + (1+\epsilon)n^{2-\alpha}\log(e(1+\epsilon)) + (1-\alpha) (1+\epsilon)n^{2-\alpha}\log(n)  + q_n^4 t_n n^5 \log \left(\frac{e\binom{(q_n n)^2 + 1}{2}}{q_n^4 t_n n^4} \right)  \right\}\\
&= \exp \left\{(7+c -8\alpha)\Theta \left(n \log(n) \right)+ \Theta\left(n^{2-\alpha} \right) + (1-\alpha) (1+\epsilon)n^{2-\alpha}\log(n) + \Theta\left(n^{5+c-5\alpha}\log(n) \right)  \right\}\\
&= \exp \left\{ \Theta\left(n^{2-\alpha} \right) + (1-\alpha) (1+\epsilon)n^{2-\alpha}\log(n) + \Theta\left(n^{5+c-5\alpha}\log(n) \right)  \right\}.
\end{align*}
As noted in the proof of Theorem \ref{theorem:1-non-reconstructable-part-1},
\begin{align*}
\min_{m \in S_E} \binom{\binom{n}{2}}{m} &\geq \left( \frac{1}{(1-\epsilon)p_n}\right)^{(1-\epsilon) \binom{n}{2} p_n}  \geq \exp \left(\frac{1}{2}(1-\epsilon) (n-1)^2 n^{-\alpha} \log\left( \frac{n^{\alpha}}{(1-\epsilon)} \right)\right).
\end{align*}
Using this bound and the bound on $|S|$, we have
\begin{align*}
&\max_{m \in S} \frac{n!|S|}{\binom{\binom{n}{2}}{m} } 
\leq \exp \Big\{ \Theta\left(n^{2-\alpha} \right) + (1-\alpha) (1+\epsilon)n^{2-\alpha}\log(n) + \Theta\left(n^{5+c-5\alpha}\log(n) \right) \\
&~~~~~~~~~~~~~~~~~~~~~~~~~~~~~ - \frac{1}{2}(1-\epsilon) (n-1)^2 n^{-\alpha} \log\left( \frac{n^{\alpha}}{(1-\epsilon)} \right) \Big\}\\
&= \exp \left\{ \Theta\left(n^{2-\alpha} \right) + (1-\alpha) (1+\epsilon)n^{2-\alpha}\log(n) + \Theta\left(n^{5+c-5\alpha}\log(n) \right) - \frac{\alpha}{2}(1-\epsilon) (n-1)^2 n^{-\alpha} \log\left( n \right) \right\}.
\end{align*}
Therefore, it suffices to have $5 + c - 5\alpha < 2 -\alpha \iff c < 4\alpha -3$ as well as 
\[(1-\alpha)(1+\epsilon) < \frac{\alpha}{2} (1-\epsilon) \iff \alpha > \frac{2 + 2 \epsilon}{3 + \epsilon}.\]
Taking $\epsilon = \frac{1}{6}$ satisfies the above. We then obtain
\begin{align*}
\max_{m \in S} \frac{n!|S|}{\binom{\binom{n}{2}}{m} }  &\leq \exp \left(\Theta\left(n^{2-\alpha}\right) - \Theta\left(n^{2-\alpha} \log(n) \right)\right) = o(1).
\end{align*}

Summarizing, we require $\max\{0, 5\alpha-4\} < c < \min\{\alpha, 4\alpha -3\}$.
This is a consistent condition for $\frac{3}{4} < \alpha < 1$. We then choose $c = \frac{1}{2} \left(\max\{0, 5\alpha-4\} +  \min\{\alpha, 4\alpha -3\}\right)$. \qedhere
\end{proof}

\begin{proof}[Proof of Proposition \ref{proposition:overlap}]
Let $\beta = \frac{1}{2}\left(\alpha + \frac{2}{3} \right)$, so that $\frac{2}{3} < \beta < \alpha$. We will show that with probability $1-o(1)$, there exists some $k_n \leq n^{1-\beta}$ such that there are at least $\frac{1}{2}n^{\beta}$ stars of degree $k_n$ in $G_n$, implying that the $1$-neigborhoods are not unique.

Consider an arbitrary vertex $v$. By the Markov inequality,
\begin{align}
\mathbb{P}\left(\deg(v) > n^{1-\beta} \right) \leq \frac{n^{1-\alpha}}{n^{1-\beta}} = n^{\beta - \alpha} = o(1). \label{eq:degree-Markov}
\end{align}
Let $S_v$ be the indicator that $\mathcal{N}_1(v)$ is a star of degree at most $n^{1-\beta}$. Let $Z \sim \text{Bin}\left(\binom{n^{1-\beta}}{2}, p_n \right)$. Observe that $\mathbb{E}[Z] < \frac{1}{2}n^{2-2\beta - \alpha} = o(1)$. Applying \eqref{eq:degree-Markov}, we have
\begin{align*}
\mathbb{P}\left(S_v = 1\right) &= \mathbb{P}\left(S_v = 1 ~|~ \deg(v) \leq  n^{1-\beta} \right) \left(1 - o(1)\right)+ o(1)\\
&\geq \mathbb{P}\left(S_v = 1 ~|~ \deg(v) =  n^{1-\beta} \right)\left(1 - o(1)\right) \\
&= 1 - \mathbb{P}\left(Z \geq 1 \right) - o(1)\\
&\geq 1 - \mathbb{E}[Z] - o(1) = 1 - o(1).
\end{align*}
By the Markov inequality, $\mathbb{P}\left(\sum_{v \in V} S_v < \frac{3n}{4} \right) = \mathbb{P}\left(\sum_{v \in V} (1- S_v) > \frac{n}{4} \right) = o(1).$
We conclude that with probability $1-o(1)$, there are at least $\frac{3n}{4}$ vertices $v$ such that $\mathcal{N}_1(v)$ is a star of degree at most $n^{1-\beta}$. Then by  the Pigeonhole Principle, there is at least one $k_n \leq n^{1-\beta}$ such that the number of stars of degree $k_n$ is at least $\frac{3n}{4(n^{1-\beta} + 1)} \geq \frac{n}{2n^{1-\beta}} $.
\end{proof}

\section{Finding the Neighborhood Centers}\label{section:find-center}
Previously we assumed that the central vertex $v$ of each neighborhood $\mathcal{N}_r(v)$ was labeled. In this section, we show how to determine the center if it is not labeled.
\subsection{Finding centers of $1$-neighborhoods}
The following lemma shows that in order to identify the center of each neighborhood, we should simply take the vertex that is connected to all others.
\begin{lemma}\label{lemma:1-neighborhood-find-center}
Suppose $p_n = n^{-\alpha}$ for $0 < \alpha < 1$. Then with high probability, for all $v \in V$, the vertex $v$ is the only other vertex connected to every other vertex in $\mathcal{N}_1(v)$.
\end{lemma}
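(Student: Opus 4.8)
The plan is to bound, via a first-moment (union) argument, the probability that some vertex $v$ admits a rival ``center.'' For an ordered pair of distinct vertices $(v,w)$, let $B_{v,w}$ be the event that $w \sim v$ and $w$ is adjacent to every neighbor of $v$ other than $w$ itself. Unwinding the definition of $\mathcal{N}_1(v)$, a vertex $w \neq v$ is connected to every other vertex of $\mathcal{N}_1(v)$ precisely when $B_{v,w}$ holds: the edge $(w,v)$ accounts for the center, and the remaining vertices of $\mathcal{N}_1(v)$ are exactly $N(v) \setminus \{w\}$. Hence the event that the lemma fails is $\bigcup_{(v,w)} B_{v,w}$, the union taken over all ordered pairs of distinct vertices, and it suffices to show $\sum_{(v,w)} \mathbb{P}(B_{v,w}) = o(1)$.

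Next I would compute $\mathbb{P}(B_{v,w})$ exactly. Conditioning on the edge $(v,w)$, which is present with probability $p_n$, the constraints are independent across the remaining $n-2$ vertices $u \notin \{v,w\}$: the requirement imposed by $u$ --- that $u$ is not a neighbor of $v$, or else is also a neighbor of $w$ --- fails only when $v \sim u$ and $w \not\sim u$, an event of probability $p_n(1-p_n)$. Therefore
\[
\mathbb{P}(B_{v,w}) = p_n\bigl(1 - p_n(1-p_n)\bigr)^{\,n-2} \le p_n \exp\bigl(-(n-2)\,p_n(1-p_n)\bigr).
\]
Equivalently this is $p_n\,\mathbb{E}\bigl[p_n^{\deg(v)-1}\bigr]$ with $\deg(v)-1 \sim \mathrm{Bin}(n-2,p_n)$ under the conditioning $v \sim w$, which makes transparent why the bound is small: a witness $w$ must connect to all $\Theta(n^{1-\alpha})$ other neighbors of $v$. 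Summing over the at most $n^2$ ordered pairs and substituting $p_n = n^{-\alpha}$ gives a failure probability at most $n^{2-\alpha}\exp(-\Theta(n^{1-\alpha}))$, which tends to $0$ because $n^{1-\alpha} = \omega(\log n)$ for every $0 < \alpha < 1$; this completes the argument.

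The step I expect to require the most care is the interaction between the rival-center event and the (random) degree of $v$: low-degree vertices are exactly the ones for which a neighbor can cheaply dominate the entire neighborhood (a leaf $v$ of degree $1$, say, is genuinely indistinguishable from its unique neighbor). The exact computation above resolves this automatically, since the probability mass on atypically small $\deg(v)$ is itself only $\exp(-\Theta(n^{1-\alpha}))$ and is absorbed into the binomial expectation. Alternatively, one could first invoke a Chernoff bound (Lemma \ref{lemma:chernoff}) together with a union bound to guarantee $\min_{v} \deg(v) \ge \tfrac{1}{2} n^{1-\alpha}$ with high probability, and then bound each $\mathbb{P}(B_{v,w})$ by $p_n^{\deg(v)-1} \le p_n^{\frac{1}{2}n^{1-\alpha}-1}$ on that event. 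Aside from this, the only bookkeeping to check is that conditioning on the neighbor set of $v$ leaves the edges from a candidate $w$ to the other neighbors of $v$ unrevealed, so that they are genuinely fresh $\mathrm{Bernoulli}(p_n)$ variables and the independence used above is valid.
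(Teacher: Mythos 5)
Your proof is correct and follows essentially the same route as the paper's: a union bound over pairs $(v,w)$, exploiting the fact that a rival center $w$ must be adjacent to all $\Theta(n^{1-\alpha})$ other neighbors of $v$, an event of probability roughly $p_n^{\deg(v)-1}$. The only difference is presentational — you compute $\mathbb{P}(B_{v,w}) = p_n\bigl(1-p_n(1-p_n)\bigr)^{n-2}$ exactly and thereby absorb the degree fluctuations automatically, whereas the paper first conditions on $\deg(v) \ge (1-\epsilon)(n-1)p_n$ (your stated alternative) — and both yield the same $n^{2-\alpha}\exp(-\Theta(n^{1-\alpha})) = o(1)$ bound.
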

\begin{proof}
Consider $v \in V$ and the neighborhood $\mathcal{N}_1(v)$. The degree of $v$ is distributed according to a binomial distribution with parameters $(n-1,p_n)$. Furthermore, for $0 < \epsilon < 1$, it holds that $\deg(v) \geq (1-\epsilon)(n-1)p_n$ with high probability.
Conditioned on $\deg(v) = m \geq (1 - \epsilon) p_n (n-1)$, the probability that a given neighbor $w \sim v$ is connected to all other neighbors of $v$ is then $p_n^{m-1}$. The claim follows by a union bound.
\end{proof}

\subsection{Finding centers of $2$-neighborhoods}
To identify the center of a neighborhood, we first prune the neighborhood by removing all vertices with degree less than $\frac{1}{2}n^{1-\alpha}$. We then return the vertex with the highest degree in the subgraph induced by the remaining vertices (which is also the only vertex connected to all the others). Recall that if $\alpha < \frac{1}{2}$, then the graph $G$ has diameter $2$ with high probability.
\begin{lemma}\label{lemma:2-neighborhood-find-center}
Suppose $p_n = n^{-\alpha}$ for $\frac{1}{2} < \alpha < 1$. Then with high probability, for all $v \in V$, the above approach recovers the center $v$ in $\mathcal{N}_2(v)$.\end{lemma}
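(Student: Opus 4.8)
The plan is to show that after pruning, the surviving vertices are exactly $v$ together with its distance-$1$ neighbors, and that within this surviving set $v$ is the unique vertex adjacent to all others as well as the highest-degree vertex. The whole argument rests on a degree dichotomy inside $\mathcal{N}_2(v)$: the center and the distance-$1$ neighbors have degree $\Theta(n^{1-\alpha})$ within $\mathcal{N}_2(v)$, whereas every distance-$2$ vertex has degree only of order $n^{2-3\alpha}$ there, and $2-3\alpha < 1-\alpha$ precisely because $\alpha > \frac{1}{2}$. The pruning threshold $\frac{1}{2}n^{1-\alpha}$ sits strictly between these two scales.

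First I would fix $0 < \epsilon < \frac{1}{2}$ and record uniform degree concentration: by Lemma \ref{lemma:chernoff} and a union bound, with high probability every vertex $w$ satisfies $(1-\epsilon)n^{1-\alpha} \leq \deg(w) \leq (1+\epsilon)n^{1-\alpha}$, each failure costing $\exp(-\Theta(n^{1-\alpha}))$. This immediately handles the center and the distance-$1$ neighbors: since every neighbor of a vertex $w \sim v$ lies within distance $2$ of $v$, the degree of $w$ inside $\mathcal{N}_2(v)$ equals its full degree, which exceeds $\frac{1}{2}n^{1-\alpha}$ as $1-\epsilon > \frac{1}{2}$; the same holds for $v$ itself. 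Hence $v$ and all of $\mathcal{N}_1(v)$ survive the pruning.

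The main work is to show that every distance-$2$ vertex is pruned, uniformly over all centers. I would reveal $\mathcal{N}_2(v)$ in stages: expose the edges at $v$ to obtain $A = \mathcal{N}_1(v)$, then expose the edges from $A$ outward to identify the distance-$2$ set $B$, noting that this does not touch any edge with both endpoints in $B$. Conditioned on $A$ and $B$, the degree of a vertex $u \in B$ inside $\mathcal{N}_2(v)$ is the number of its neighbors in $A$ (a common-neighbor count $W_{uv}$) plus a genuinely fresh $\mathrm{Bin}(|B|-1, p_n)$ contribution from edges within $B$. The count $W_{uv} \sim \mathrm{Bin}(n-2, p_n^2)$ has mean $o(1)$, so a binomial tail bound $\mathbb{P}(W_{uv} \geq t) \leq (np_n^2)^t = n^{(1-2\alpha)t}$ shows that $\max_{a,b} W_{ab} \leq C(\alpha)$ for a constant $C$ with high probability once $t > \frac{2}{2\alpha-1}$, so the union bound over $n^2$ pairs closes. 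The degree bounds give $|B| \leq (1+\epsilon)^2 n^{2-2\alpha}$, so the fresh binomial has mean $\Theta(n^{2-3\alpha})$; since the target $\frac{1}{4}n^{1-\alpha}$ is a diverging multiple of this mean, Lemma \ref{lemma:chernoff} yields a tail of order $\exp(-\Theta(n^{1-\alpha}))$. Combining, every $u \in B$ has degree at most $C + \frac{1}{4}n^{1-\alpha} < \frac{1}{2}n^{1-\alpha}$ inside $\mathcal{N}_2(v)$, and a union bound over the $\leq n$ centers and $\leq n$ distance-$2$ vertices leaves the claim intact with high probability.

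Finally I would conclude from the structure. After pruning, the surviving set is exactly $\{v\} \cup \mathcal{N}_1(v)$, on which $v$ is adjacent to every other vertex, so its induced degree equals $|A| = \Theta(n^{1-\alpha})$. Any surviving $w \in A$ is adjacent, within this set, only to $v$ and to the common neighbors of $v$ and $w$ (which all lie in $A$), a total of at most $1 + C$ vertices by the same bound; this is far below $|A|$, so $w$ is not adjacent to all other survivors and has strictly smaller induced degree than $v$. Hence $v$ is simultaneously the unique vertex connected to all others and the highest-degree vertex in the pruned subgraph, exactly what the algorithm returns. The main obstacle is the middle step: obtaining a clean, fresh binomial for the within-$\mathcal{N}_2(v)$ degree of a distance-$2$ vertex requires the staged revelation so that edges inside $B$ remain untouched, and then carrying the $\exp(-\Theta(n^{1-\alpha}))$ tail through a simultaneous union bound over all centers and all distance-$2$ vertices.
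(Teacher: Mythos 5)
Your proof is correct and follows essentially the same route as the paper's: both rest on the degree dichotomy inside $\mathcal{N}_2(v)$ between the $\Theta(n^{1-\alpha})$ induced degrees of $v$ and its distance-$1$ neighbors and the $O(n^{2-3\alpha})$ induced degrees of distance-$2$ vertices, with $2-3\alpha < 1-\alpha$ exactly when $\alpha > \frac{1}{2}$. The only (harmless) divergences are that your staged edge-revelation makes explicit the conditioning the paper leaves implicit, and that at the end you argue directly via the uniform $O(1)$ bound on common-neighbor counts that $v$ is the unique maximum-degree survivor, whereas the paper instead invokes Lemma \ref{lemma:1-neighborhood-find-center} on the pruned neighborhood.
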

\begin{proof}
Consider the neighborhood $\mathcal{N}_2(v)$. Observe that with high probability, the total number of vertices in the $2$-neighborhood is of order at most $(np_n)^2 = n^{2 - 2\alpha}$. Each neighbor $u$ of $v$ has $(n-1)p_n = \Theta\left(n^{1-\alpha}\right)$ expected neighbors, with exponential concentration. On the other hand, each vertex $w$ which is at a distance of $2$ from $v$ has $1 + \left(| V(\mathcal{N}_2(v))| -3\right)p_n = O\left(n^{2-3\alpha} \right)$ expected neighbors in $\mathcal{N}_2(v)$. Comparing exponents, we have $2 - 3 \alpha < 1 - \alpha$ for $\alpha >\frac{1}{2}$. Therefore, by removing all vertices with fewer than $\frac{1}{2} n^{1-\alpha}$ neighbors, we remove all vertices which are at distance $2$ from $v$, while not removing any neighbors of $v$, with high probability. By a union bound, this process leaves us with $\mathcal{N}_1(v)$ for each $v \in V$ with high probability. By Lemma \ref{lemma:1-neighborhood-find-center}, we can identify the center of each pruned neighborhood.
\end{proof}

\section{Conclusion and Open Problems}
In this paper, we have studied the problem of shotgun assembly of \ER graphs from $1$- and $2$-neighborhoods. We have established regimes for reconstructability and non-reconstructability in terms of the edge probability $p_n = n^{-\alpha}$. Our work leaves some open problems: 
\begin{enumerate}
\item Is reconstruction from $2$-neighborhoods possible for $\alpha = \frac{1}{2}$ or $\frac{3}{5} \leq \alpha \leq \frac{3}{4}$? 
\item Our results show that there is an efficient algorithm for reconstruction from $2$-neighborhoods in the regime $0 < \alpha < \frac{1}{2}$. Can we find efficient algorithms for reconstruction in other cases? 
\end{enumerate}

\textbf{Acknowledgements}: We thank the anonymous reviewer for a very careful review. The reviewer's comments caught several errors, and improved the clarity and presentation of this work.

\end{document}